\newtheorem{theorem}{Theorem}[section]
\newtheorem{lemma}[theorem]{Lemma}
\newtheorem{proposition}[theorem]{Proposition}
\newtheorem{corollary}[theorem]{Corollary}
\newtheorem{claim}[theorem]{Claim}
\newenvironment{proof}[1][Proof]{\noindent\textbf{#1.} }{\ \rule{0.5em}{0.5em}}
\begin{document}

\title{\bf Nodal ground state solution to a biharmonic equation via dual method}

\author{Claudianor O. Alves\thanks{C. O. Alves was partially supported by CNPq/Brazil
		301807/2013-2 and INCT-MAT, coalves@dme.ufcg.edu.br}\, , \, \  Al\^{a}nnio B. N\'{o}brega \thanks{alannio@dme.ufcg.edu.br}\vspace{2mm}
	\and {\small  Universidade Federal de Campina Grande} \\ {\small Unidade Acad\^emica de Matem\'{a}tica} \\ {\small CEP: 58429-900, Campina Grande - Pb, Brazil}\\}

\date{}
\maketitle

\begin{abstract}
	Using dual method we establish the existence of nodal ground state solution for the
	following class of problems 
	$$
	\left\{
	\begin{array}{l}
	\Delta^2 u   =  f(u), \quad \mbox{in} \quad \Omega, \\
	u =Bu=0,\quad\mbox{on} \quad \partial \Omega
	\end{array}
	\right.
	$$
	where $\Delta^2$ is the biharmonic operator, $B=\Delta$ or $B=\dfrac{\partial}{\partial \nu}$ and $f$ is a $C^1-$
	function having subcritical growth.

	\vspace{0.3cm}
	
	\noindent{\bf Mathematics Subject Classifications (2010):} 35J20,
	35J65
	
	\vspace{0.3cm}
	
	\noindent {\bf Keywords:}  biharmonic operator, nodal solution, dual method, variational methods.
\end{abstract}

%%%%%%%%%%%%%%%%%%%%%%%%%%%%%%%%%%%%%%%%%%%%%%%%%%%%%%%%%%%%%%%%%%%%%%%%%%%%%%%%%INTRODUÇÃO%%%%%%%%%%%%%%%%%%%%%%%%%%%%%%%%%%%%%%%%%%%%%%%%%%%%%%%%%%%%%%%%%%%%%%%%%%%%%%%%%%%%%%%%%%%%%%%%%%%%%%%%%%%%%%%%%%%%%%%%%%%%%%%%%%%%%%%%%%%%%%%%%%%%%%%%%%%%%%%%%%%%%%%%%%%%%%%%%%%%%%%%%%%%%%%%%%%%%%%%%%%%%%%%%%%%%%%%%%%%%%%%%%%%%%%%%%%%%%%%%%%%%%%%%%%%%%%%%%%%%%%%%%%%%%%%%%%%%%%%%%%%%%%%%%%%%%%
\section{Introduction}

In this paper,  we are concerned with the existence of {\it nodal ground state solution} for the following problem
$$
	\left\{
	\begin{array}{l}
	\Delta^2 u   =  f(u), \quad \mbox{in} \quad \Omega, \\
	u =Bu=0,\quad\mbox{on} \quad \partial \Omega
	\end{array}
	\right.
	\eqno{(P)}
	$$
	where $\Delta^2$ is the biharmonic operator, $f$ is a $C^{1}-$function with subcritical growth and  $Bu=\Delta u$  or $Bu=\dfrac{\partial u}{\partial \nu}$. If $Bu=\Delta u$, we have the {\it Navier boundary conditions}
$$
	u=\Delta u=0,\ \mbox{on}\ \partial\Omega,
$$
and for the case $Bu=\dfrac{\partial u}{\partial \nu}$, we have the boundary condition 
$$
	u= \frac{\partial u}{\partial \nu}=0,\ \mbox{on}\ \partial\Omega,
$$
which is called {\it Dirichlet boundary conditions}. Here $\frac{\partial }{\partial \nu}$ denotes the exterior normal derivative at the boundary. Hereafter, in the case of Dirichlet  boundary condition, we assume that $\Omega$ permits to apply maximum principle, for more details about this subject, see Gazzola, Grunau and Sweers \cite[Chapter 6]{G}, and Grunau and Robert \cite{GR}.

In what follows, we say that a solution $u$ of $(P)$ is a {\it nodal solution}, when $u^{\pm}\not=0$, where $u^{+}=\max\{u,0\}$ and $u^{-}=\min\{u,0\}$. 

Related to nonlinearity $f$, we assume the following assumptions:
\begin{description}
	\item[$(f_1)$] $f:\mathbb{R} \rightarrow \mathbb{R}$ is a $C^1$ function  and $f(0)=f'(0)=0.$
	\item[$(f_2)$] $f$ is odd, that is, $f(t)=-f(-t), \quad \forall t \in \mathbb{R}.$
	\item[$(f_3)$] There exist $c_0>0$ and $p \in (2,2_*),$ such that 
	$$
	\lim_{t\rightarrow +\infty}\frac{f(t)}{t^{p-1}}=c_0,
	$$ 
	where
	$$
	2_*=
	\left\{ \begin{array}{c}
	\frac{2N}{N-4},\quad N\geq 5 \\
	+\infty, \quad 1\leq N \leq 4 .
	\end{array}
	\right.
	$$ 
	\item[$(f_4)$] There exist $b_0>0$ and $q \in (2,p]$, such that 
	$$
	\lim_{t\rightarrow 0^+}\frac{f(t)}{t^{q-1}}=b_0.
	$$
	\item[$(f_5)$] $\frac{f(t)}{t}$ is increasing for $t>0$.
\end{description}

Here, we would like point out that the function below verifies the conditions $(f_1)-(f_5)$:
$$
f(t)=\sum_{j=1}^{k}a_j|t|^{p_j-2}t, \quad \forall t \in \mathbb{R},
$$
where $a_j>0$ and $p_j \in (2,2_{*})$ for all $j \in \{1,....,k\}$.

 The equations involving the biharmonic operator have received special attention of many researchers, in part, because describe the mechanical vibrations of an elastic plate, which among other things describes the traveling waves in a suspension bridge, see \cite{BDF,FG,G,GK,LM}. Moreover, the biharmonic operator has  intrinsic problems as, the lack of a maximum principle for all bounded domains. Recently, many authors have studied various aspects of the biharmonic, see for example, \cite{BG,JQ,P3,P1,P2,YT,ZWZ}. 
 
 In the case of the Laplacian operator, the study of the existence of nodal solution has a rich literature, see for example,  Bartsch, Weth and Willem \cite{Weth}, Bartsch and Weth \cite{Bartsch}, Bartsch, Liu and Weth \cite{Bartsch1}, Castro, Cossio and Neuberger \cite{Castro} and their references. However, we cannot use or adapt some techniques developed for the laplacian, because in the most part of the above papers, the authors prove the existence of nodal solution for problems like  
 $$
 \left\{
 \begin{array}{rcl}
 -\Delta u = f(u) \ \ \mbox{in} \ \ \Omega,\\
 u=0 \ \ \mbox{on} \ \partial\Omega,
 \end{array}
 \right.\leqno{(E)}
 $$
 by minimizing the energy function
 $J:H^{1}_{0}(\Omega) \to \mathbb{R}$ given by
 $$
 J(u)= \frac{1}{2}\int_{\Omega}|\nabla
 u|^{2} \, dx -\int_{\Omega} F(u) \ dx,
 $$
 on the set 
 $$
 \mathcal{M}=\{u \in H^{1}_{0}(\Omega)\,:\, J'(u^{\pm})u^{\pm}=0\}.
 $$
 After some estimates, it is proved that there is $u \in \mathcal{M}$ such that $J'(u)=0$. This critical point is called a 
 {\it nodal ground state solution ( or least energy nodal solution ) } for $(E)$.  In problems involving the biharmonic operator, we cannot even ensure that given  $u \in H^{2}(\Omega)$, we also have  $u^{\pm} \in H^{2}(\Omega)$.
 
The existence of nodal solution  for $(P)$ has been studied by Weth \cite{Weth2}, by supposing the following conditions on $f$: \\

\noindent $(W_1)$ \,  $f:\Omega \times \mathbb{R} \to \mathbb{R}$ is a Carathe\'odory function, and $f(x,0)=0$ for a.e $x$ in $\Omega$.\\
\noindent $(W_2)$ \, There are $q^{*}>0, q_* \in (0,\lambda_1)$, and $0<p<\frac{8}{N-4}$ for $N>4$, resp. $p>0$ for $N \leq 4$, such that
$$
|f(x,t)-f(x,s)|\leq [q_*+q_*(|t|^{p}+|s|^{p})]|t-s| \quad \mbox{for a.e} \quad x \in \Omega, \quad t \in \mathbb{R}.
$$
\noindent $(W_3)$ \, There are $R>0$ and $\eta > 2$ such that
$$
\eta F(x,t) \leq f(t)t, \quad \mbox{for a.e} \quad x \in \Omega, \, |t| \geq R.
$$
\noindent $(W_4)$ \, $f$ is nondecreasing in $t \in \mathbb{R}$ for a.e $x \in \Omega$.

Here, $F(x,t)=\int_{0}^{t}f(x,s)ds$ and $\lambda_1$ denotes the first eigenvalue of $\Delta^{2}$ on $\Omega$ relative to the Dirichlet or Navier boundary conditions. In that paper,  using the Moreau's decomposition for a Hilbert space,  Weth has showed  the existence of at least three  solutions, a positive solution, a negative solution and a nodal solution.

Motivated by the above references, in the present paper we study the existence of nodal solution for problem $(P)$ using a new approach, more precisely, the {\it Dual Method}. Here, we have completed the study made in \cite{Weth2}, in the following sense: \\
\noindent {\bf 1}- Our arguments permit to consider some nonlinearities, which cannot be used in \cite{Weth2}. For example, be we can work with a nonlinearity like
$$
f(t)=\varphi(t)|t|^{p-2}t,
$$
where $\varphi$ is a $C^1-$function, increasing, positive and bounded such that for any $s>1$, the function $\frac{f'(t)}{t^{s-2}}$ is not bounded at infinity. However, this type of nonlinearity cannot be used in \cite{Weth2}, because $(W_2)$ yields  $\frac{f'(t)}{t^{p-2}}$ is bounded at infinity. \\
\noindent {\bf 2}- Our main result establishes the existence of nodal ground state solution, which was not considered in  \cite{Weth2}

 \vspace{0.5 cm}

Before to state our main result, we would like to recall that the energy functional $I:H \rightarrow \mathbb{R}$ associated with $(P)$ is given by 
$$
I(u)=\frac{1}{2}\int_{\Omega}|\Delta u|^2dx-\int_{\Omega}F(u) dx,
$$
where $H=H^2(\Omega)\cap H_0^1(\Omega)$ in the case of the Navier boundary condition, and $H=H_0^2(\Omega)$ for the Dirichlet boundary condition. Moreover, it is well known that for these boundary conditions, $H$ is a Hilbert space endowed with the inner product 
$$
\left\langle u,v\right\rangle=\int_{\Omega}\Delta u \Delta v \, dx,
$$
whose associated norm is given by
$$
\|u\|=\left(\int_{\Omega}|\Delta u|^2dx\right)^{\frac{1}{2}}.
$$

It is standard to check that critical points of $I$ are precisely weak solutions of $(P)$. In the sequel, we will say that $u \in H$ is a {\it nodal ground state solution} if 
$$
I(u)=\min\{I(v)\,:\, v \quad \mbox{is a nodal solution for} \quad (P) \}.
$$

Our main result is the following
 \begin{theorem}\label{Main Theorem}
  Suppose that $f$ satisfies $(f_1) -(f_5)$. Then, problem $(P)$ possesses a nodal ground state solution.
 \end{theorem}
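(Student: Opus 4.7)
The plan is to use the dual method: instead of minimizing the primal functional $I$ on a nodal constraint set that does not make sense in $H$, work with the Legendre transform of $F$ in the dual space $L^{p'}(\Omega)$, where $v^{\pm}$ are automatically well defined. Under $(f_1)$ and $(f_5)$, $f$ is a strictly increasing $C^1$-diffeomorphism of $\mathbb{R}$ (from $f'(t)=f(t)/t+t(f(t)/t)'>0$ for $t>0$ and oddness). Let $g=f^{-1}$, $G(s)=\int_0^s g(\tau)\,d\tau$, and $p'=p/(p-1)\in(1,2)$. By $(f_3)$--$(f_4)$, $g(s)\sim(s/c_0)^{1/(p-1)}$ as $|s|\to\infty$ and $g(s)\sim(s/b_0)^{1/(q-1)}$ as $s\to 0$, so $G$ is strictly convex with $|s|^{p'}$ growth at infinity. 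Let $T=(\Delta^2)^{-1}$ denote the solution operator of the BVP on $H$; by elliptic regularity $T$ is linear, bounded, compact, self-adjoint on $L^{p'}(\Omega)$, and positivity-preserving under the standing hypothesis on $\Omega$. Define
$$
J^{*}(v)=\int_{\Omega}G(v)\,dx-\frac{1}{2}\int_{\Omega}v\,Tv\,dx,\qquad v\in L^{p'}(\Omega).
$$
The Legendre identity $F(u)=v\,u-G(v)$ for $v=f(u)$, $u=Tv$, gives $I(u)=J^{*}(v)$; moreover $v\in L^{p'}$ is a nontrivial critical point of $J^{*}$ iff $u=Tv\in H$ is a nontrivial weak solution of $(P)$. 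The key gain is that $v\in L^{p'}\Rightarrow v^{\pm}\in L^{p'}$, so
$$
\mathcal{M}:=\{v\in L^{p'}(\Omega):\ v^{\pm}\neq 0,\ (J^{*})'(v)v^{+}=(J^{*})'(v)v^{-}=0\}
$$
lives in the natural space, bypassing the $u^{\pm}\in H$ obstruction flagged in the introduction.

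With this set-up I would carry out a Nehari-type fiber analysis. Given $v$ with $v^{\pm}\neq 0$, consider $w_{t,s}=tv^{+}+sv^{-}$; writing out $(J^{*})'(w_{t,s})w_{t,s}^{+}=0$ yields
$$
\int_{\Omega}g(tv^{+})tv^{+}\,dx-t^{2}\!\int_{\Omega}(Tv^{+})v^{+}\,dx-ts\!\int_{\Omega}(Tv^{-})v^{+}\,dx=0,
$$
with a symmetric equation for the other component. Using that $s\mapsto g(s)/s$ is \emph{decreasing} on $(0,\infty)$ (the dual of $(f_{5})$), together with the positivity estimate $-\int_{\Omega}(Tv^{-})v^{+}\,dx\geq 0$, a standard monotonicity/continuity argument produces a unique $(\bar t,\bar s)\in(0,\infty)^{2}$ with $\bar tv^{+}+\bar sv^{-}\in\mathcal{M}$, at which $J^{*}(w_{t,s})$ attains its maximum on the fibre. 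Hence $\mathcal{M}\neq\emptyset$ and $c:=\inf_{\mathcal{M}}J^{*}>0$.

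For attainment, take a minimizing sequence $(v_n)\subset\mathcal{M}$. From $J^{*}(v_n)\to c$ and $(J^{*})'(v_n)v_n=0$ one derives a uniform $L^{p'}$ bound; the compactness of $T$ then forces strong convergence of $Tv_n$ and, via the Nehari identities, a uniform lower bound $\|v_n^{\pm}\|_{p'}\geq\gamma>0$. Passing to a weak limit $v$ yields $v^{\pm}\neq 0$, and applying the fiber projection of the previous paragraph gives $v\in\mathcal{M}$ with $J^{*}(v)=c$. To promote $v$ to a free critical point I would adapt the deformation argument of Bartsch--Weth \cite{Weth}: if $(J^{*})'(v)\neq 0$, a pseudo-gradient flow strictly decreases $J^{*}$ in a neighborhood of $v$, and precomposing with the fiber projection $(t,s)\mapsto tv^{+}+sv^{-}$ produces an element of $\mathcal{M}$ with value $<c$, a contradiction. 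Then $u_{0}=Tv$ is the desired nodal ground state, since $I(u_{0})=J^{*}(v)=c$ and every nodal solution of $(P)$ corresponds via $v=f(u)$ to an element of $\mathcal{M}$.

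The main obstacle is the final deformation/projection step. In the Laplacian setting one has $\int|\nabla u|^{2}=\int|\nabla u^{+}|^{2}+\int|\nabla u^{-}|^{2}$, which orthogonally decouples the Nehari system. Here the quadratic form $\int v\,Tv$ is \emph{nonlocal} and the cross term $\int(Tv^{-})v^{+}$ does not vanish; controlling its sign via the positivity of $T$ and verifying that the fiber map $(t,s)\mapsto tv^{+}+sv^{-}$ is a $C^{1}$-diffeomorphism between a neighborhood of $(1,1)\in(0,\infty)^{2}$ and a neighborhood of $v$ in $\mathcal{M}$ is the delicate technical core — and the reason the dual formulation, which lives in a Banach lattice compatible with the $(\cdot)^{\pm}$ decomposition, is essential for the argument to close.
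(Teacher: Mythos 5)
Your proposal follows essentially the same route as the paper: the same dual functional $\Psi$ (your $J^{*}$) on $L^{p/(p-1)}(\Omega)$, the same nodal Nehari set $\mathcal{M}$, the same fiber analysis of $(t,s)\mapsto\Psi(tv^{+}+sv^{-})$ with the cross term controlled by the positivity of $T$ (the paper uses Miranda's theorem for the projection onto $\mathcal{M}$), and the same minimize-then-deform endgame. The only substantive point your sketch elides --- and where the paper does real work --- is the identification of the weak limits of $v_{n}^{\pm}$ with $v^{\pm}$ (weak convergence does not commute with taking positive parts), which the paper establishes via a.e.\ convergence obtained from the compactness and positivity of $T$.
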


Before to conclude this introduction, we would like point out that the Dual Method have been used to study the existence of solution for a lot of  types of problems, for example,  elliptic equations, elliptic systems, and  wave equations. The reader can get more information about this method in the papers due to Alves, Carri\~ao and Miyagaki  \cite{ACM}, Alves \cite{Alves2}, Ambrosetti and Struwe \cite{AS}, Struwe \cite{Struwe}, Willem \cite{W1} and their references.

%%%%%%%%%%%%%%%%Dual Functionnal%%%%%%%%
\section{The Dual Method}

In this section, we will define and show some properties of the dual functional associated with $(P)$. To this end, we begin recalling that for each $w \in L^{\frac{p}{p-1}}(\Omega)$, using some results found in Gazzola, Grunau and Sweers \cite[Chapter 2]{G}, there is a unique solution $u \in W^{4,\frac{p}{p-1}}(\Omega)$ of the linear problem  
$$
 \left\{ 
\begin{array}{cc}
 \Delta^2 u= w,&\ \mbox{in}\ \Omega; \\
 u=B u=0 &\ \mbox{on}\ \partial \Omega.
 \end{array}
 \right.
\eqno{(P_w)}
$$
Moreover, there is a constant $C>0$ such that
$$
\|u\|_{W^{4,\frac{p}{p-1}}(\Omega)} \leq C \|w\|_{L^{\frac{p}{p-1}}(\Omega)}.
$$

From the above commentaries, we can consider the linear operator \linebreak $T:L^{\frac{p}{p-1}}(\Omega) \rightarrow W^{4,\frac{p}{p-1}}(\Omega)$, such that for $w \in L^{\frac{p}{p-1}}(\Omega)$,  $Tw$ is the unique solution of $(P_w)$. From the last inequality,  
$$
\|Tw\|_{W^{4,\frac{p}{p-1}}(\Omega)} \leq C \|w\|_{L^{\frac{p}{p-1}}(\Omega)}, \quad \forall w \in L^{\frac{p}{p-1}}(\Omega),
$$
showing that $T$ is continuous. Now, recalling that the embeddings below
$$
W^{4,\frac{p}{p-1}}(\Omega) \hookrightarrow L^{s}(\Omega), \quad \forall s \in \left[\frac{p}{p-1},4(p)_*\right)
$$
are compact for 
$$
	4(p)_*=
	\left\{ \begin{array}{c}
	\frac{Np}{p(N-4)-N},\quad N\geq 5 \\
	+\infty, \quad 1\leq N \leq 4 ,
	\end{array}
	\right.
$$ 
we can ensure that $T:L^{\frac{p}{p-1}}(\Omega) \rightarrow L^{p}(\Omega)$ is a linear compact operator, because $p \in \left(\frac{p}{p-1},4(p)_*\right)$. Moreover, $T$ satisfies the following properties: \\

\noindent $(T_1)$ \, $T$ is {\it positive}, that is, for any $w \in L^{\frac{p}{p-1}}(\Omega)$, $\int_{\Omega}wTw\,dx\geq 0$. Moreover, if $w$ is nonnegative and $w \not=0$, $Tw>0$ in $\Omega$. \\

\noindent $(T_2)$ \, $T$ is {\it symmetric}, in the sense that, if $w_1,w_2 \in L^{\frac{p}{p-1}}(\Omega)$, then  
$$
\int_{\Omega}w_1Tw_2\,dx= \int_{\Omega}w_2Tw_1\,dx.
$$ 

Using the functional $T$, we set $\Psi:L^{\frac{p}{p-1}}(\Omega) \to \mathbb{R}$ by
$$
\Psi(w)=\int_{\Omega}H(w)dx-\frac{1}{2}\int_{\Omega}wTw dx,
$$
where $H(t)=\int_{0}^{t}h(s)ds$ and $h$ is the inverse of $f$. Note that, $f$ is invertible, because $(f_1)-(f_5)$ imply that $f:\mathbb{R} \to \mathbb{R}$ is bijective. The functional $\Psi$ is called the {\it dual functional} associated with $(P)$.

In the sequel, for any $w \in L^{\frac{p}{p-1}}(\Omega)$, we will denote by $\|w\|$ its norm in $L^{\frac{p}{p-1}}(\Omega)$, that is,
$$
\|w\|=\left(\int_{\Omega}|w|^{\frac{p}{p-1}}\,dx \right)^{\frac{p-1}{p}}.
$$

Next, we will prove some properties of $\Psi$. However, to do this, firstly we must show some properties of $h$. \\

\noindent ${\bf (h_0)}$ \, $h$ is continuous, $h(0)=0$ and $h(t)=-h(-t) \quad \forall t \in \mathbb{R}$ . \\ 

\noindent ${\bf (h_1)}$ $h$ verifies the following growth conditions: Given $\varepsilon>0 $, there are $ \delta, M >0$ such that
\begin{equation}\label{1}
\left(\frac{1}{c_0(1+\varepsilon)}t\right)^{1/(p-1)}\le h(t)\le \left(\frac{1}{c_0(1-\varepsilon)}t\right)^{1/(p-1)},\, \forall t \ge f(M),
\end{equation}
and
\begin{equation}\label{2}
\left(\frac{1}{b_0(1+\varepsilon)}t\right)^{1/(q-1)}\le h(t)\le \left(\frac{1}{b_0(1-\varepsilon)}t\right)^{1/(q-1)},\, \forall t \le f(\delta) .
\end{equation}
\noindent Indeed, from $(f_3)-(f_4)$,  given $\varepsilon>0 $, there are $ \delta, M >0$ such that 
\begin{equation}\label{3}
(1-\varepsilon)c_0t^{p-1}\le f(t)\leq (1+\varepsilon)c_0t^{p-1},\, \forall t \ge M 
\end{equation}
and 
\begin{equation}\label{4}
(1-\varepsilon)b_0t^{q-1}\le f(t)\leq (1+\varepsilon)b_0t^{q-1},\, \forall t \le \delta.
\end{equation}
Now, (\ref{1})-(\ref{2}) follow from $(\ref{3})-(\ref{4})$.  \\

\noindent ${\bf (h_2)}$ The functions $H$ and $h$ satisfy the following inequality
\begin{equation}\label{5}
H(t)-\frac{1}{2}h(t)t\geq C_{\varepsilon} t^{p/(p-1)}, \quad \forall t \geq f(M).
\end{equation}
\noindent In fact, for $t \geq f(M)$,   
$$ 
H(t)\geq \left( \frac{p-1}{p}\right)\left( \frac{1}{c_0 (1+\varepsilon)}\right)^{1/(p-1)} t^{p/(p-1)}+K
$$
where  $K$ is a constant, which can be negative. Once 
$$
\limsup_{t \rightarrow +\infty}\frac{H(t)}{t^{p/(p-1)}}\geq \left( \frac{p-1}{p}\right)\left( \frac{1}{c_0(1+\varepsilon)}\right)^{1/(p-1)},
$$
we derive 
$$
H(t)\geq \left[ \left( \frac{p-1}{p}\right)\left( \frac{1}{c_0(1+\varepsilon)}\right)^{1/(p-1)}-\varepsilon\right] t^{p/(p-1)}.
$$ 
for $t$ large enough. Hence, by (\ref{1}), 
\begin{equation}\label{6}
	H(t)-\frac{1}{2}h(t)t \geq C_\varepsilon t^{p/(p-1)},
\end{equation}
for $t$ large enough and 
$$
C_\varepsilon = \left[ \frac{(p-1)}{p}\left( \frac{1}{c_0(1+\varepsilon)}\right)^{1/(p-1)}- \frac{1}{2}\left( \frac{1}{c_0(1-\varepsilon)}\right)^{1/(p-1)}-\varepsilon\right].
$$ 
As $C_\varepsilon>0$ for $\varepsilon$ small enough, the estimate is proved. \\

\noindent ${\bf (h_3)}$ There are positive constants $c_1,c_2$ and $ \delta'$  satisfying 
\begin{equation} \label{7}
H(t) \leq c_1t^{\frac{p}{p-1}}, \quad \forall t \geq 0
\end{equation}
and
\begin{equation} \label{8}
H(t)\geq \left\{ 
\begin{array}{c}
c_2 t^{q/(q-1)},\ \mbox{for}\ t \in [0,  \delta'), \\
c_2 t^{p/(p-1)},\ \mbox{for}\ t\geq \delta'.
\end{array}
\right.
\end{equation}

\noindent The proof of ${\bf (h_3)}$ follows with the same type of arguments explored to prove ${\bf (h_2)}$. \\

\noindent \noindent ${\bf (h_4)}$ \, The function $H(t)-\frac{1}{2}h(t)t$ is increasing for $t>0$. \\

\noindent This property is an immediate consequence of the fact that $h \in C^{1}(\mathbb{R})$ and $\frac{h(t)}{t}$ is decreasing for $t>0$. \\

Using  ${\bf (h_0)}$-${\bf (h_3)}$, it is easy to check that $\Psi$ is $C^1(L^{\frac{p}{p-1}}(\Omega),\mathbb{R})$ with 
$$
\Psi'(w)\eta=\int_{\Omega}h(w)\eta dx-\int_{\Omega}\eta Tw dx; \quad \forall w,\eta \in L^{\frac{p}{p-1}}(\Omega).
$$
Moreover, if $w \in L^{\frac{p}{p-1}}(\Omega)$ is a critical point of $\Psi$, then it generates a solution for  $(P)$. Indeed, because for any $ \eta \in L^{\frac{p}{p-1}}(\Omega)$, we know that $\Psi'(w)\eta=0$, or equivalently,
$$
\int_{\Omega}(h(w)-Tw)\eta dx=0,\ \forall \eta \in L^{\frac{p}{p-1}}(\Omega),
$$
implying that
$$
Tw=h(w).
$$
Hence, setting $u=Tw$, we derive that 
$$
\Delta^2 u=\Delta^2 Tw=w=f(h(w))=f(Tw)=f(u).
$$
Furthermore, $u$ also verifies the boundary condition $Bu=0$. Thus, $u$ is a nontrivial solution of $(P)$. Here, it is very important to observe that $u$ is a {\it nodal solution} if, and only if, $w$ is a {\it nodal critical point}, that is, $w^{\pm} \not=0$.

Next, we show that $\Psi$ satisfies the mountain pass geometry.  
\begin{lemma}\label{l1}\mbox{}\\
	\noindent $i)\mbox{There exist}\ \rho, \beta>0\ \mbox{such that}\ \Psi(w)\geq \beta,\ \mbox{for}\ \|w\|= \rho.$\\
	\noindent $ii) \mbox{There exists}\ e\in L^{\frac{p}{p-1}}(\Omega),\ \mbox{with}\ \|e\|> \rho,\ \mbox{such that}\ \Psi(e)<0. $
	
\end{lemma}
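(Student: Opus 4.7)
\textbf{Proof plan for Lemma \ref{l1}.}

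Part $(ii)$ is the easy half. Fix a nontrivial, nonnegative $w_0\in L^{p/(p-1)}(\Omega)$. By property $(T_1)$, $\alpha := \int_{\Omega} w_0 Tw_0\,dx>0$, while the upper bound in $(h_3)$ yields $\int_\Omega H(tw_0)\,dx \le c_1 t^{p/(p-1)}\|w_0\|^{p/(p-1)}$. Since $p/(p-1)<2$,
$$\Psi(tw_0) \le c_1 t^{p/(p-1)}\|w_0\|^{p/(p-1)} - \tfrac{t^2}{2}\alpha \longrightarrow -\infty \quad\text{as}\quad t\to\infty,$$
so any sufficiently large $t$ produces $e=tw_0$ with $\|e\|>\rho$ and $\Psi(e)<0$.

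For part $(i)$, I would control the quadratic term of $\Psi$ via H\"older's inequality and the continuity $T:L^{p/(p-1)}(\Omega)\to L^{p}(\Omega)$, obtaining $\int_\Omega wTw\,dx \le \|w\|\,\|Tw\|_{L^p(\Omega)} \le C\|w\|^2$, and then compare this with a suitable lower bound for $\int_\Omega H(w)\,dx$. The main obstacle here is that $(h_3)$ only gives $H(t)\ge c_2 t^{q/(q-1)}$ near zero, and since $q\le p$ forces $q/(q-1)\ge p/(p-1)$, there is no direct way to estimate $\int_\Omega|w|^{q/(q-1)}\,dx$ in terms of $\|w\|=\|w\|_{L^{p/(p-1)}}$: the Lebesgue inclusion goes the wrong way.

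To circumvent this, I would split $\Omega=A\cup B$ with $A=\{|w|\le\delta'\}$, $B=\{|w|>\delta'\}$, and distinguish the two alternatives
$$\int_{A}|w|^{p/(p-1)}\,dx \ge \tfrac12\|w\|^{p/(p-1)} \quad\text{or}\quad \int_{B}|w|^{p/(p-1)}\,dx \ge \tfrac12\|w\|^{p/(p-1)}.$$
In the first case, the Lebesgue inclusion on the bounded set $A$, namely
$$\|w\|_{L^{p/(p-1)}(A)} \le |\Omega|^{\frac{p-1}{p}-\frac{q-1}{q}}\|w\|_{L^{q/(q-1)}(A)},$$
valid because $q/(q-1)\ge p/(p-1)$ and $|A|\le|\Omega|$, gives $\int_A |w|^{q/(q-1)}\,dx \ge C_1 \|w\|^{q/(q-1)}$, and hence $\int_A H(w)\,dx \ge c_2 C_1 \|w\|^{q/(q-1)}$ by $(h_3)$. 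In the second case, the lower bound $H(t)\ge c_2 t^{p/(p-1)}$ for $t\ge\delta'$ yields $\int_B H(w)\,dx \ge \tfrac{c_2}{2}\|w\|^{p/(p-1)}$. Using $\|w\|^{p/(p-1)}\ge \|w\|^{q/(q-1)}$ whenever $\|w\|\le 1$, both alternatives give
$$\int_\Omega H(w)\,dx \ge C_0 \|w\|^{q/(q-1)}.$$
Combining this with the upper bound on the quadratic term, for $\|w\|=\rho$ one gets
$$\Psi(w) \ge C_0 \rho^{q/(q-1)} - \tfrac{C}{2}\rho^2 = \rho^{q/(q-1)}\Bigl(C_0 - \tfrac{C}{2}\rho^{2-q/(q-1)}\Bigr).$$
Since $q/(q-1)<2$, the bracket is bounded below by $C_0/2$ for all $\rho$ sufficiently small; fixing such a $\rho$ produces $\beta:=\tfrac{C_0}{2}\rho^{q/(q-1)}>0$ and concludes the proof.
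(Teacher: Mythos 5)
Your proposal is correct and follows essentially the same route as the paper: part $(ii)$ via $H(t)\le c_1 t^{p/(p-1)}$ with $p/(p-1)<2$ against the positive quadratic term, and part $(i)$ by splitting $\Omega$ into $\{|w|\le\delta'\}$ and $\{|w|>\delta'\}$, using the two lower bounds of $(h_3)$ together with the finite-measure Lebesgue embedding to arrive at $\Psi(w)\ge C_0\|w\|^{q/(q-1)}-c\|w\|^2$ with $q/(q-1)<2$. The only difference is bookkeeping: you combine the two pieces through a dichotomy (one of them carries half of $\|w\|^{p/(p-1)}$), whereas the paper uses the elementary inequality $A^{\alpha}+B^{\alpha}\ge C(A+B)^{\alpha}$; both are valid.
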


\begin{proof}
For each $w \in L^{\frac{p}{p-1}}(\Omega)$, 
$$
\int_{\Omega}H(w)dx\ge c_1\int_{[\left|w(x) \right|\leq \delta']}\left|w(x) \right|^{q/(q-1)}dx+c_2\int_{[\left|w(x) \right|> \delta']}\left|w(x) \right|^{p/(p-1)}dx.
$$
By H\"older's inequality, 
$$
\int_{[\left|w(x) \right|\leq \delta']}\left|w(x) \right|^{p/(p-1)}dx \leq c \left( \int_{[\left|w(x) \right|\leq \delta']}\left|w(x) \right|^{q/(q-1)}dx\right) ^{p(q-1)/(p-1)q}
$$
and since $q \in (2,p]$, if $\|w\|$ is small enough, we see that 
$$
\int_{[\left|w(x) \right|> \delta']}\left|w(x) \right|^{p/(p-1)}dx \geq \left( \int_{[\left|w(x) \right|> \delta']}\left|w(x) \right|^{p/(p-1)}dx\right)^{q(p-1)/p(q-1)}. 
$$
Gathering the last two inequality, we get 
		\begin{align}\Psi(w)\geq &\tilde{c}_1\left( \int_{[\left|w(x) \right|\leq \delta']}\left|w(x) \right|^{p/(p-1)}dx\right)^{q(p-1)/p(q-1)} +&\nonumber\\
			&+c_2\left( \int_{[\left|w(x) \right|> \delta']}\left|w(x) \right|^{p/(p-1)}dx\right)^{q(p-1)/p(q-1)}-c_3\|w\|^2.&\nonumber 
		\end{align}
Recalling that given $\alpha>0$ there is $C>0$ such that  	
$$
A^{\alpha}+B^{\alpha}\geq C(A+B)^{\alpha}, \quad \forall A,B>0,
$$
it follows that  
$$
\Psi(w)\geq  C\|w\|^{q/(q-1)}-c_3\|w\|^{2} \quad \forall w \in L^{\frac{p}{p-1}}(\Omega).  
$$
Once $q>2$, fixing $\rho$ small enough, we find $\beta >0$ such that  
$$
\Psi(w)\geq \beta,\ \mbox{for}\ \|w\|=\rho,
$$	
showing $i)$. To show $ii)$, it is sufficient to see that for each $w \in L^{\frac{p}{p-1}}(\Omega)\setminus \{0\}$ and $t>0$, 
$$
\Psi(tw) \to -\infty \quad \mbox{as} \quad t \to +\infty.
$$		
Here, we have used ${\bf (h_3)}$ and $(T_1)$. 
\end{proof}

\vspace{0.5 cm}

The lemma below will help to prove that $\Psi$ verifies the $(PS)$ condition.

\begin{lemma}\label{l2}
	Let $\{w_n\}$ be a $(PS)_c$ sequence for $\Psi$. Then, $\{w_n\}$ is bounded in $L^{\frac{p}{p-1}}(\Omega)$.
\end{lemma}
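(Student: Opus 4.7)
The functional identity to exploit is
$$
\Psi(w_n)-\tfrac{1}{2}\Psi'(w_n)w_n=\int_{\Omega}\Bigl[H(w_n)-\tfrac{1}{2}h(w_n)w_n\Bigr]\,dx,
$$
because the quadratic piece $\tfrac{1}{2}\int w_n T w_n$ cancels out. The left-hand side is controlled by the $(PS)_c$ hypothesis: it is at most $|c|+o(1)+\tfrac{1}{2}\|\Psi'(w_n)\|_{*}\|w_n\|$, i.e.\ $C+o(1)\|w_n\|$. The strategy is to bound the right-hand side from below by a multiple of $\|w_n\|^{p/(p-1)}$ and then use $p/(p-1)>1$ to deduce boundedness.

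\textbf{Key steps.} First I would observe that, since $h$ is odd by $(h_0)$, the integrand $H(t)-\tfrac12 h(t)t$ is an even function of $t$, so property $(h_2)$ can be upgraded to
$$
H(t)-\tfrac12 h(t)t\;\geq\; C_{\varepsilon}\,|t|^{p/(p-1)}\qquad\text{for all }|t|\geq f(M),
$$
with $C_\varepsilon>0$ (for $\varepsilon$ small) because $p>2$. On the complementary region $\{|t|\leq f(M)\}$ the continuous function $H(t)-\tfrac12 h(t)t$ is bounded, say by $M_{0}$, and moreover $|t|^{p/(p-1)}\leq f(M)^{p/(p-1)}$ there. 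Splitting the integral over $\Omega_{n}^{1}:=\{|w_{n}|\geq f(M)\}$ and its complement, I would write
$$
\int_{\Omega}|w_{n}|^{p/(p-1)}dx \;\leq\; f(M)^{p/(p-1)}|\Omega| \;+\; \frac{1}{C_\varepsilon}\int_{\Omega_{n}^{1}}\Bigl[H(w_{n})-\tfrac12 h(w_{n})w_{n}\Bigr]dx,
$$
and then bound the integral on the right by
$$
\int_{\Omega}\Bigl[H(w_{n})-\tfrac12 h(w_{n})w_{n}\Bigr]dx + M_{0}|\Omega|
\;=\;\Psi(w_{n})-\tfrac12\Psi'(w_{n})w_{n}+M_{0}|\Omega|,
$$
using $(h_{4})$ to ensure $H-\tfrac12 h\cdot\mathrm{id}\geq 0$ on $\{|t|\leq f(M)\}$ actually — or simply absorbing the constant $M_{0}|\Omega|$ into the upper bound regardless of sign.

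\textbf{Conclusion.} Combining with the $(PS)_c$ information gives
$$
\|w_{n}\|^{p/(p-1)}=\int_{\Omega}|w_{n}|^{p/(p-1)}dx \;\leq\; K_{1}+K_{2}\,\|w_{n}\|
$$
for constants $K_{1},K_{2}>0$ independent of $n$. Since $p/(p-1)>1$ (because $p>2$), this inequality forces $\{\|w_{n}\|\}$ to remain bounded.

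\textbf{Main obstacle.} The delicate point is verifying that the constant $C_\varepsilon$ arising from $(h_{2})$ is strictly positive for some admissible $\varepsilon$; this is where $p>2$ is used in an essential way, via the computation already carried out in the proof of $(h_{2})$ (the leading term $\frac{p-1}{p}-\frac{1}{2}=\frac{p-2}{2p}$ is positive). Everything else is a routine truncation argument, with care taken to treat both signs of $w_{n}$ using the oddness of $h$.
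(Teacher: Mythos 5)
Your proposal is correct and follows exactly the paper's argument: compare $\Psi(w_n)-\tfrac12\Psi'(w_n)w_n$ from above via the $(PS)_c$ condition and from below via property $({\bf h_2})$, obtaining $\tilde C_\varepsilon\|w_n\|^{p/(p-1)}-\tilde C|\Omega|\le c+1+\|w_n\|$ and concluding from the superlinear exponent. Your treatment is in fact slightly more careful than the paper's (making explicit the evenness of $H(t)-\tfrac12 h(t)t$, the splitting over $\{|w_n|\ge f(M)\}$, and the positivity of $C_\varepsilon$ via $\tfrac{p-1}{p}-\tfrac12=\tfrac{p-2}{2p}>0$), so no changes are needed.
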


\begin{proof}
Since $\{w_n\} \subset L^{\frac{p}{p-1}}(\Omega)$ is a $(PS)_c$ sequence for $\Psi$, we must have  
$$
\Psi(w_n) \rightarrow c\ \mbox{and}\ \Psi'(w_n) \rightarrow 0.
$$
Hence, 
	\begin{equation}\label{9}
		\Psi(w_n)-\frac{1}{2}\Psi'(w_n)w_n\leq
		c+1+\|w_n\|
	\end{equation}
for $n$ large enough. On the other hand, from  (\ref{5}),
	\begin{align}\label{10}
		\Psi(w_n)-\frac{1}{2}\Psi'(w_n)w_n &= \int_{\Omega}\left(
		H(w_n)-\frac{1}{2}h(w_n)w_n\right)dx&\nonumber\\
		&\ge \tilde{C}_{\epsilon}\int_{\Omega}\left|w_n \right|^{\frac{p}{p-1}} dx-\tilde{C}\left| \Omega \right|.
	\end{align}
Gathering $(\ref{9})$ and $(\ref{10})$,
$$
\tilde{C}_{\epsilon}\|w_n\|^{\frac{p}{p-1}}-\tilde{C}\left|\Omega \right| \leq c+1+\|w_n\|,
$$ 
for $n$ large enough. As $p>2$, the last inequality yields $\{w_n\}$ is bounded in
	$L^{\frac{p}{p-1}}(\Omega)$.
\end{proof}

\vspace{0.5 cm}

From the previous lemmas, we are ready to show that $\Psi$ verifies the $(PS)$ condition.

\begin{lemma}\label{l3}
	The functional $\Psi$ satisfies the $(PS)$ condition. 
\end{lemma}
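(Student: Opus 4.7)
The plan is to exploit the compactness of $T$ as an operator from $L^{p/(p-1)}(\Omega)$ into $L^{p}(\Omega)$, together with the fact that $h$ is the inverse of $f$, so that strong convergence of $h(w_n)$ in $L^p$ can be converted back into strong convergence of $w_n = f(h(w_n))$ in $L^{p/(p-1)}$.

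First I would take a $(PS)_c$ sequence $\{w_n\}\subset L^{p/(p-1)}(\Omega)$ and invoke Lemma \ref{l2} to conclude that it is bounded in this reflexive space; then, up to a subsequence, $w_n\rightharpoonup w$ weakly. Since $T\colon L^{p/(p-1)}(\Omega)\to L^{p}(\Omega)$ was shown to be compact in the previous section, we obtain $Tw_n\to Tw$ strongly in $L^{p}(\Omega)$. Next, viewing $\Psi'(w_n)$ as an element of the dual $L^{p}(\Omega)$, the identity $\Psi'(w_n)=h(w_n)-Tw_n$ combined with $\Psi'(w_n)\to 0$ yields $h(w_n)\to Tw$ in $L^{p}(\Omega)$.

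Now passing once more to a subsequence, $h(w_n)\to Tw$ almost everywhere and there exists $g\in L^{p}(\Omega)$ with $|h(w_n)|\le g$ a.e. Since $f$ is continuous with $f(0)=0$, the identity $w_n=f(h(w_n))$ gives $w_n\to f(Tw)$ pointwise a.e. To upgrade this to strong convergence in $L^{p/(p-1)}(\Omega)$, I would use the subcritical growth given by $(f_1)$ and $(f_3)$, which provides a constant $C>0$ such that $|f(t)|\le C(|t|+|t|^{p-1})$ for every $t\in\R$. Consequently
\begin{equation*}
|w_n|^{p/(p-1)} = |f(h(w_n))|^{p/(p-1)} \le C\bigl(|h(w_n)|^{p/(p-1)}+|h(w_n)|^{p}\bigr) \le C\bigl(g^{p/(p-1)}+g^{p}\bigr),
\end{equation*}
and the right-hand side is an $L^{1}(\Omega)$ dominant since $\Omega$ is bounded and $g\in L^{p}(\Omega)$. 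Lebesgue's dominated convergence theorem then gives $w_n\to f(Tw)$ strongly in $L^{p/(p-1)}(\Omega)$, and uniqueness of the weak limit forces $w=f(Tw)$, so that $w_n\to w$ strongly.

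I expect the main delicate point to be the passage in the last paragraph: the conclusion $h(w_n)\to Tw$ in $L^{p}$ does not automatically transfer to $L^{p/(p-1)}$ convergence of $w_n$, and one has to use the subcritical exponent in $(f_3)$ in a quantitative way to produce the dominating function. All other steps are standard reflexivity and compactness arguments that have already been set up in the previous pages.
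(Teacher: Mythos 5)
Your proposal is correct and follows essentially the same route as the paper: boundedness from Lemma \ref{l2}, compactness of $T$ to get $h(w_n)\to Tw$ in $L^p(\Omega)$, pointwise a.e.\ convergence of $w_n=f(h(w_n))$, and a dominated convergence argument to upgrade to strong convergence in $L^{p/(p-1)}(\Omega)$. The only cosmetic difference is that you build the $L^1$ dominant from the upper growth bound on $f$ applied to $g$, whereas the paper inverts the lower bounds on $h$ from $(h_1)$ to dominate $|w_n|$ directly by a function in $L^{p/(p-1)}(\Omega)$; the two are equivalent.
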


\begin{proof}
	Let $\{w_n\}$ be a $(PS)_c$ sequence for $\Psi$. Then,  
	$$
	\Psi(w_n) \rightarrow c\ \mbox{and}\ \Psi'(w_n)\rightarrow 0.
	$$
Consequently, 
$$
\sup_{\|\eta \|\leq 1}\left|\Psi'(w_n)\eta\right| \rightarrow 0,
$$
or equivalently, 
$$
\sup_{\|\eta \|\leq 1}\left|\int_{\Omega}\left(h(w_n)-Tw_n\right)\eta \, dx\right| \rightarrow 0.
$$
Applying  Riez's Theorem, we can guarantee that 
$$
\left| h(w_n)-Tw_n\right|_{L^{p}(\Omega)}\rightarrow 0.
$$
	
On the other hand, from Lemma \ref{l2}, $\{w_n\}$ is bounded in $L^{\frac{p}{p-1}}(\Omega).$ As  $L^{\frac{p}{p-1}}(\Omega)$ is reflexive, for some subsequence of $\{w_n\}$, still denoted by itself, there is $w \in L^{\frac{p}{p-1}}(\Omega)$ such that 
$$
w_n \rightharpoonup w \quad \mbox{in} \quad L^{\frac{p}{p-1}}(\Omega).
$$
Now, using the compactness of $T$,  we infer that  $Tw_n \rightarrow Tw$ in $L^p(\Omega),$ and so,  
$$
\left| h(w_n)-Tw\right|_{L^{p}(\Omega)}\leq \left| h(w_n)-Tw_n\right|_{L^{p}(\Omega)}+\left|Tw_n-Tw\right|_{L^{p}(\Omega)}\rightarrow 0,
$$
implying that for some subsequence, there is 	 $g \in L^{p}(\Omega)$ such that
\begin{equation}\label{11}
\left|h(w_n)(x)\right|\leq g(x) \quad \mbox{for a.e} \quad x \in \Omega 
\end{equation}
and
\begin{equation}\label{12}
h(w_n(x)) \rightarrow u(x) \quad \mbox{for a.e} \quad x \in \Omega.
\end{equation}
Recalling that $h$ is the inverse of $f$, it follows that	
\begin{equation} \label{13}
w_n(x) \rightarrow f(u(x)):=w(x) \quad \mbox{for a.e} \quad x \in \Omega.
\end{equation}
Combining  (\ref{1}) and (\ref{2}), there exist positives constants $M_1,M_2$ and $\delta"$ such that
	\begin{equation}\label{14}
	\left|h(w_n)\right|\geq \left\{\begin{array}{cc}
	M_1|w_n|^{1/(p-1)}&,\left| w_n\right| > \delta" \\
	M_2|w_n|^{1/(q-1)},&\left| w_n\right|\leq \delta".
	\end{array} \right.
	\end{equation}
Therefore, from  $(\ref{11})-(\ref{14})$, there is $\widetilde{g} \in L^{\frac{p}{p-1}}(\Omega)$ such that  
	$$
	|w_n(x)| \leq \widetilde{g}(x) \quad \mbox{for a.e} \quad x \in \Omega.
	$$
The last inequality combined with Lebesgue's Theorem gives 
$$
w_n \rightarrow w \, \mbox{in}\ L^{\frac{p}{p-1}}(\Omega),
$$ 
finishing the proof. \end{proof}

\begin{theorem} \label{T1}
The functional $\Psi$ has a critical point $w_* \in L^{\frac{p}{p-1}}(\Omega)$, whose the energy is equal to mountain pass level. Moreover, $w_*$ has defined signal, that is, it is positive or negative on $\Omega$. 
\end{theorem}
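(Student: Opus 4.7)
The plan splits into two steps: producing a critical point of $\Psi$ at the mountain pass level, and then upgrading it to one of definite sign.

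For the first step, Lemma \ref{l1} yields the mountain pass geometry for $\Psi$, and Lemma \ref{l3} gives the Palais--Smale condition on $L^{p/(p-1)}(\Omega)$. The classical Mountain Pass Theorem of Ambrosetti--Rabinowitz applied directly to $\Psi$ therefore produces a critical point $w_*$ at the level
$$c = \inf_{\gamma \in \Gamma}\max_{t \in [0,1]}\Psi(\gamma(t)),$$
where $\Gamma$ is the usual set of continuous paths joining $0$ to $e$ in $L^{p/(p-1)}(\Omega)$.

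For the second step, the key observation is the pointwise-in-$w$ inequality $\Psi(|w|) \leq \Psi(w)$ on $L^{p/(p-1)}(\Omega)$. It follows from two facts: (i) since $h$ is odd by $(h_0)$, $H$ is even, so $H(|w|) = H(w)$; (ii) writing $w = w_+ - w_-$ with $w_\pm \geq 0$, the symmetry and positivity $(T_1)$ of $T$ give
$$\int_\Omega |w|\,T|w|\, dx - \int_\Omega w\,Tw\, dx = 4\int_\Omega w_+\,T w_-\, dx \geq 0.$$
Next, I would characterize $c$ as a Nehari-type minimax $c = \inf_{w \neq 0}\max_{t>0}\Psi(tw)$: the uniqueness of the maximizer $t_w > 0$ is ensured by the monotonicity in $(h_4)$, and the existence (with $\Psi(tw) \to -\infty$ as $t \to \infty$) by $(h_3)$ together with $(T_1)$. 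Combined with the previous inequality, this lets one restrict the minimization to the cone $\mathcal{P} = \{w \geq 0\}$, namely $c = \inf_{w \in \mathcal{P}\setminus\{0\}}\Psi(t_w w)$. Taking a minimizing sequence $\{v_n\} \subset \mathcal{P} \cap \mathcal{N}$ (with $\mathcal{N}$ the Nehari set) and applying Ekeland's variational principle on $\mathcal{N}$ yields a Palais--Smale sequence of nonnegative functions at level $c$, which converges by Lemma \ref{l3} to a nonnegative critical point $w_*$. Strict positivity then follows from $(T_1)$: since $\Psi(w_*) = c > 0$ forces $w_* \not\equiv 0$ and $Tw_* = h(w_*)$, we obtain $h(w_*) > 0$ on $\Omega$, hence $w_* > 0$.

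The main obstacle lies in the second step: the Mountain Pass Theorem alone provides no control on the sign of $w_*$, and the $\mathbb{Z}_2$-symmetry $\Psi(-w) = \Psi(w)$ only yields a companion critical point $-w_*$ of equal energy without fixing its sign. The delicate technical point is ensuring that the sequence extracted via Ekeland on the intersection $\mathcal{N} \cap \mathcal{P}$ is a genuine $(PS)$ sequence of $\Psi$ and not merely one with vanishing derivative tangential to the constraint. This depends on $\mathcal{N}$ being a natural constraint for $\Psi$ together with the positivity property $(T_1)$, which plays in this dual formulation the role a maximum principle would play in a direct variational treatment of the biharmonic problem.
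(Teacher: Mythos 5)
Your first step coincides with the paper's: Lemmas \ref{l1}--\ref{l3} plus the Ambrosetti--Rabinowitz theorem give a critical point at the level $c$. For the sign claim, however, you take a genuinely different route. The paper never minimizes over a cone: it argues by contradiction. Since $\int_\Omega w^{+}Tw^{-}\,dx\le 0$ (with the paper's convention $w^{-}=\min\{w,0\}\le 0$), one has the superadditivity $\Psi(tw_*)\ge \Psi(tw_*^{+})+\Psi(tw_*^{-})$ for all $t\ge 0$; if $w_*^{\pm}\neq 0$, the minimax characterization \eqref{16} forces $\Psi(t_0^{\pm}w_*^{\pm})\ge c$ for the unique maximizers $t_0^{\pm}$, and evaluating the superadditivity at $t=t_0^{+}$ and at $t=t_0^{-}$ yields the incompatible conclusions $t_0^{+}>t_0^{-}$ and $t_0^{+}<t_0^{-}$. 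This is entirely elementary: it uses only \eqref{16}, the operator properties $(T_1)$--$(T_2)$, and the fibration structure, with no Ekeland principle and no manifold structure on $\mathcal{N}$. Your route instead uses $\Psi(|w|)\le\Psi(w)$ (which is correct: $H$ even, and $\int|w|T|w|-\int wTw=4\int w_{+}Tw_{-}\ge 0$ in your convention) to show $c=\inf_{\mathcal{N}\cap\mathcal{P}}\Psi$, and then extracts a nonnegative minimizer by Ekeland plus the $(PS)$ condition. What your approach buys is a constructive minimization and a direct proof of strict positivity via $Tw_*=h(w_*)>0$; what it costs is precisely the machinery you flag but do not supply.

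That unresolved step is the one substantive weakness. To convert the Ekeland output on $\mathcal{N}$ into a genuine $(PS)$ sequence for $\Psi$ you must check that $\mathcal{N}$ is a $C^{1}$ natural constraint, i.e.\ that $J(w)=\Psi'(w)w$ is $C^{1}$ with $J'(w)w\neq 0$ on $\mathcal{N}$. This is delicate here because $h=f^{-1}$ has $h'(t)=1/f'(h(t))\to+\infty$ as $t\to 0$ (recall $f'(0)=0$), so $\Psi$ is not $C^{2}$; one must work with $t\mapsto h(t)t$, whose derivative $h(t)+h'(t)t$ does extend continuously by $0$ at the origin with subcritical growth, and then use the strict monotonicity of $h(t)/t$ to get $J'(w)w=\int_\Omega\bigl(h'(w)w^{2}-h(w)w\bigr)dx<0$ on $\mathcal{N}$. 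Also note that Ekeland's principle does not return \emph{nonnegative} functions, only functions within distance $o(1)$ of your nonnegative minimizing sequence; nonnegativity of the limit $w_*$ then follows, but should be stated that way. With these points repaired your argument is valid, but the paper's contradiction argument reaches the same conclusion with far less infrastructure.
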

\begin{proof}
By Lemmas \ref{l1} and \ref{l2}, 	the functional $\Psi$ satisfies the hypotheses of the Mountain Pass Theorem due to Ambrosetti-Rabinowitz \cite{AR}. Thus, the mountain pass level $c$ is a  critical point for $\Psi$, that is, there exists  $w_* \in L^{\frac{p}{p-1}}(\Omega)$ such that $\Psi'(w_*)=0$ and $\Psi(w_*)=c>0.$ Moreover, once $\Psi(0)=0$, we conclude $w_* \neq 0.$ We recall that $c$ is given by 
\begin{equation} \label{15}
	c=\inf_{\gamma \in \Gamma}\max_{t \in [0,1]}\Psi(\gamma(t))>0,
\end{equation}
where
$$
\Gamma=\left\{ \gamma \in C([0,1],L^{\frac{p}{p-1}}(\Omega)) ;\ \gamma(0)=0\ \mbox{and}\ \Psi(\gamma(1))<0\right\}.
$$

Before to continue the proof, we would like to point out that using the same type of arguments found in Willem's book \cite{W}, we can ensure that the mountain pass level $c$ verifies the following equalities 
\begin{equation} \label{16}
c=\inf_{w\in L^{\frac{p}{p-1}}(\Omega)\setminus \{0\}}\sup_{t\geq 0}\Psi(tu)=\inf_{u \in \mathcal{N}}\Psi(u)=\inf_{u \in \mathcal{N}_{\Psi}}\Psi(u)
\end{equation}
where
$$
\mathcal{N}=\{w \in L^{\frac{p}{p-1}}(\Omega)\setminus \{0\}; \Psi'(w)w=0\}
$$
and
$$
\mathcal{N}_{\Psi}=\{w \in L^{\frac{p}{p-1}}(\Omega)\setminus \{0\}; \Psi'(w)=0\}.
$$
The set $\mathcal{N}$ is called the {\it Nehari Manifold} associated with $\Psi$.

Now, we will show that $w_*$ has a defined signal.  Indeed, since
$$
\int_{\Omega}w_*Tw_*dx=\int_{\Omega}(w_*{^+}+w_*{^-})T(w_*{^+}+w_*{^-})dx \leq \int_{\Omega}w_*{^+}Tw_*{^+}dx+\int_{\Omega}w_*{^-}Tw_*{^-}dx,
$$
we have that 
$$
\Psi(w_*)=\max_{t\geq 0}\Psi(tw_*)\geq \Psi(tw_*)\geq \Psi(tw_*{^+})+\Psi(tw_*{^-}), \quad \forall t\geq 0.
$$
Suppose by contradiction that $w_*{^{\pm}}\neq 0,$ then
$$
\int_{\Omega}w_*{^+}Tw_*{^+}dx> 0\ \mbox{and}\ \int_{\Omega}w_*{^-}Tw_*{^-}dx> 0.
$$
Let $t_0^{\pm} \in \mathbb{R}$ be the unique numbers satisfying
$$
\Psi(t_0^{\pm}w_*{^{\pm}})=\max_{t \geq 0}\Psi(tw_*{^{\pm}})>0.
$$
Using the characterization of $c$ mentioned in (\ref{16}), we derive that 
$$
\Psi(t_0^{+}w_*{^{+}}),\Psi(t_0^{-}w_*{^{-}})\geq c.
$$
Hence,
$$
c=\Psi(w_*)\geq \Psi(t_0^{+}w_*{^{+}})+\Psi(t_0^{+}w_*{^{-}})\geq c+\Psi(t_0^{+}w_*{^{-}}),
$$
from it follows that 
$$
\Psi(t_0^{+}w_*{^{-}})\leq 0.
$$
Therefore, 
$$
t_{0}^+>t_{0}^-.
$$
Of a similar way, 
$$
t_{0}^+<t_{0}^-,
$$
obtaining a contradiction. \end{proof}

\vspace{0.5 cm}

\subsection{Ground state solution}
In this section, we will show the existence of ground state solution for $(P)$, that is, a critical point $u \in H$ of $I$ verifying 
$$
I(u)=\inf_{v \in \mathcal{N}_{I}}I(v)
$$
where
$$
\mathcal{N}_{I}=\left\lbrace u \in H,\, I'(u)=0 \right\rbrace.
$$
To this end, the claim below is crucial in our approach

\begin{claim} $w \in L^{\frac{p}{p-1}}(\Omega)$ is a critical point for $\Psi$ if, and only if, $u=Tw$ is a critical point for $I$. Moreover, $\Psi(w)=I(u)$. 
\end{claim}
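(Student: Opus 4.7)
The plan is to treat each direction directly using the identification $u = Tw \Leftrightarrow \Delta^2 u = w$ (with the imposed boundary condition), together with the Legendre-type identity $H(f(t)) = t f(t) - F(t)$, which relates the dual potential $H$ to the original potential $F$ via the inverse relation $h = f^{-1}$.

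First, for the forward direction, I would assume $\Psi'(w) = 0$. Since $\Psi'(w)\eta = \int_{\Omega}(h(w) - Tw)\eta\, dx$ for every $\eta \in L^{p/(p-1)}(\Omega)$, this forces $h(w) = Tw$ pointwise a.e. Letting $u = Tw \in W^{4,p/(p-1)}(\Omega)$, I apply $f$ to get $w = f(h(w)) = f(u)$, and by definition of $T$ we have $\Delta^{2}u = w = f(u)$ together with the boundary condition $Bu = 0$. Therefore $u$ is a weak solution of $(P)$, i.e.\ $I'(u) = 0$. Conversely, if $I'(u) = 0$, then $u \in H$ satisfies $\Delta^{2}u = f(u)$. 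Setting $w := f(u)$, the growth condition $(f_3)$ and the embedding $H \hookrightarrow L^{p}(\Omega)$ show $w \in L^{p/(p-1)}(\Omega)$; by uniqueness in $(P_w)$ we get $Tw = u$, so $h(w) = h(f(u)) = u = Tw$, which shows $\Psi'(w) = 0$.

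For the energy identity, the two key computations are as follows. On the one hand, testing $\Delta^{2}u = w$ against $u$ and integrating by parts (the Navier and Dirichlet conditions each make the boundary terms vanish), one obtains
$$\int_{\Omega} w\, Tw\, dx = \int_{\Omega} w u\, dx = \int_{\Omega} |\Delta u|^{2}\, dx = \|u\|^{2}.$$
On the other hand, the substitution $s = f(r)$ in $H(t) = \int_0^t h(s)\,ds$ yields the pointwise identity $H(f(r)) = r f(r) - F(r)$. Applying this with $r = u(x)$ and recalling $w = f(u)$ gives
$$\int_{\Omega} H(w)\, dx = \int_{\Omega} u f(u)\, dx - \int_{\Omega} F(u)\, dx = \|u\|^{2} - \int_{\Omega} F(u)\, dx.$$
Substituting both identities into the definition of $\Psi(w)$ produces $\Psi(w) = \tfrac{1}{2}\|u\|^{2} - \int_{\Omega} F(u)\, dx = I(u)$.

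I do not expect a serious obstacle: once one observes that Euler--Lagrange equation for $\Psi$ is precisely $h(w) = Tw$, the equivalence with the Euler--Lagrange equation for $I$ is essentially a change of variables, and the energy identity is then a short computation using $H(f(t)) = t f(t) - F(t)$ and integration by parts. The only point to verify carefully is that $w = f(u) \in L^{p/(p-1)}(\Omega)$ in the backward direction, which follows from $(f_3)$ and $(f_4)$ together with the Sobolev embedding for $H$.
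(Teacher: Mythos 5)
Your proposal is correct and follows essentially the same route as the paper: both directions reduce to the identity $h(w)=Tw \Leftrightarrow \Delta^2 u = f(u)$ with $u=Tw$, and the energy identity rests on the same two ingredients, namely $\int_{\Omega} wTw\,dx=\int_{\Omega}|\Delta u|^2\,dx$ (integration by parts with either boundary condition) and the Legendre-type relation $H(f(t))=tf(t)-F(t)$. The only cosmetic differences are that you verify $f(u)\in L^{p/(p-1)}(\Omega)$ explicitly (which the paper leaves implicit) and compute $\Psi(w)$ directly rather than via the paper's rearrangement $I(u)=I(u)-I'(u)u$.
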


Indeed, we know that if $w$ is a critical point of $\Psi$, then $u=Tw$ is a critical point of $I$, see page 8 for more details. Now, given a critical point $u \in H$ of $I$ and setting $w_1=f(u)$, we must have 
$$
\Delta^2u=w_1,
$$ 
that is,
$$
Tw_1=u.
$$ 
Consequently,
$$
\int_{\Omega}Tw_1 \eta dx=\int_{\Omega}u \eta dx=\int_{\Omega}h(w_1) \eta dx, \, \forall \eta \in L^{p/(p-1)}(\Omega),
$$
showing that $w_1$ is a critical point of $\Psi$. Furthermore, 
\begin{align*}
I(u)= I(u)-I'(u)u&=\frac{1}{2}\int_{\Omega}|\Delta u|^2dx-\int_{\Omega}F(u) dx-\int_{\Omega}|\Delta u|^2dx+\int_{\Omega}f(u)u dx&\\
&=\int_{\Omega}[f(u)u-F(u)] dx-\frac{1}{2}\int_{\Omega}|\Delta u|^2dx.  &
\end{align*}
Since,
$$
\int_{\Omega}\Delta Tw_1 \Delta \eta dx=\int_{\Omega} w_1 \eta dx, \forall \eta \in H,
$$
fixing $\eta=Tw_1$, we find 
$$
\int_{\Omega}|\Delta u|^2 dx=\int_{\Omega}|\Delta Tw_1|^2 dx=\int_{\Omega} w_1 Tw_1 dx.
$$
By a direct computation, 
$$
H(t)=\int_{0}^{h(t)}rf'(r)dr=h(t)t-\int_{0}^{h(t)}f(r)dr=f(h(t))h(t)-F(h(t)), 
$$
hence,
$$
H(w_1)=f(h(w_1))h(w_1)-F(h(w_1))=f(u)u-F(u),
$$
leading to
$$
I(u)=\Psi(w_1).
$$
Considering
$$
\mathcal{N}_{I}=\left\lbrace u \in H,\, I'(u)=0 \right\rbrace 
$$
and
$$
d=\inf_{u \in  \mathcal{N}_{I}}I(u),
$$
from the previous analysis, we must have $c=d$. Therefore, $u=Tw_*$ is a {\it ground state solution}  for $(P)$, where $w_*$ is the critical point obtained in Theorem \ref{T1}.

%%%%%%%%%%%%%%%%%%%%%%%%%%%%%%%%%%%%%%%%%%%%%%%%%%%%%%%%%%%%%%%%%%%%%%%%%%%%%%%%%Nodal solution%%%%%%%%%%%%%%%%%%%%%%%%%%%%%%%%%%%%%%%%%%%%%%%%%%%%%%%%%%%%%%%%%%%%%%%%%%%%%%%%%%%%%%%%%%%%%%%%%%%%%%%%%%%%%%%%%%%%%%%%%%%%%%%%%%%%%%%%%%%%%%%%%%%%%%%%%%%%%%%%%%%%%%%%%%%%%%%%%%%%%%%%%%%%%%%%%%%%%%%%%%%%%%%%%%%%%%%%%%%%%%%%%%%%%%%%%%%%%%%%%%%%%%%%%%%%%%%%%%%%%%%%%%%%%%%%%%%%%%%%%%%%%%%%%%%%%

\section{Nodal ground state solution}
In this section, we use the dual method to find a nodal ground state solution for $(P)$. To this end, we will look for by a critical point of $\Psi$ in the set 
$$
\mathcal{M}=\left\lbrace w \in L^{p/(p-1)}(\Omega); w^{\pm}\neq 0\ \mbox{and}\ \Psi'(w)w^+=\Psi'(w)w^-=0  \right\rbrace .
$$
More precisely, we intend to prove that there is $w_0 \in \mathcal{M}$ such that
$$
\Psi(w_0)=\inf_{w \in \mathcal{M}}\Psi(w) \quad \mbox{and} \quad \Psi'(w_0)=0.
$$ 
In this case, we have that $u_0=Tw_0$ is a {\it nodal ground state solution} for $(P)$. This conclusion comes from the study made in the Subsection 2.1, because it is easy to prove that
$$
I(u_0)=\min\{I(u)\,:\, u \quad \mbox{is a nodal solution for} \quad (P) \}.
$$

As $\Psi$ has the nonlocal term $\int_{\Omega}wTwdx$, we see that  
$$
\Psi'(w^+)w^+=\int_{\Omega}w^+Tw^-dx <0\ \mbox{and}\ \Psi'(w^-)w^-=\int_{\Omega}w^-Tw^+dx <0.
$$

The above information do not permit to repeat the standard arguments used to get {\it nodal solution} involving the Laplacian operator. Here, we adapt for our case the approach explored in Alves and Souto \cite{Alves1}. 

Next, we will prove some technical lemmas, which are crucial to get the nodal ground state solution. 

\begin{lemma}\label{l4}
	There exists $\rho>0$ such that
	$$\int_{\Omega}wTwdx \geq \rho,\ \forall w \in \mathcal{N}.$$ 
\end{lemma}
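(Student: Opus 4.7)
The plan is to argue by contradiction using the Nehari characterization $c = \inf_{w \in \mathcal{N}}\Psi(w) > 0$ recorded in (\ref{16}). Suppose no such $\rho$ exists; then there is a sequence $\{w_n\}\subset \mathcal{N}$ with $\int_\Omega w_n T w_n\,dx \to 0$. The goal is to derive $\Psi(w_n) \to 0$, which contradicts $\Psi(w_n) \geq c > 0$ and closes the argument.

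Since $w_n \in \mathcal{N}$ means $\Psi'(w_n)w_n = 0$, the assumption immediately gives $\int_\Omega h(w_n) w_n\,dx = \int_\Omega w_n T w_n\,dx \to 0$. The pointwise lower bounds on $h$ coming from (\ref{1}), (\ref{2}) (together with the continuity and strict positivity of $h$ on any compact subinterval of $(0,\infty)$) produce positive constants $M_1,M_2,\delta''$ analogous to those appearing in (\ref{14}) such that
$$h(w_n)w_n \geq M_1 |w_n|^{p/(p-1)} \quad \mbox{on } \{|w_n|>\delta''\}, \qquad h(w_n)w_n \geq M_2 |w_n|^{q/(q-1)} \quad \mbox{on } \{|w_n|\leq \delta''\}.$$
Both $\int_{|w_n|>\delta''} |w_n|^{p/(p-1)}\,dx$ and $\int_{|w_n|\leq \delta''} |w_n|^{q/(q-1)}\,dx$ therefore tend to $0$.

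The key step is to upgrade this to $w_n \to 0$ in $L^{p/(p-1)}(\Omega)$. The large-amplitude piece is already under control. For the small-amplitude piece, since $q\leq p$ implies $q/(q-1)\geq p/(p-1)$, H\"older's inequality yields
$$\int_{|w_n|\leq \delta''} |w_n|^{p/(p-1)}\,dx \leq |\Omega|^{(p-q)/(q(p-1))}\left( \int_{|w_n|\leq \delta''} |w_n|^{q/(q-1)}\,dx\right)^{p(q-1)/(q(p-1))} \to 0.$$
Adding the two pieces gives $\|w_n\|^{p/(p-1)} = \int_\Omega |w_n|^{p/(p-1)}\,dx \to 0$. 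Using now the upper bound $H(t) \leq c_1 t^{p/(p-1)}$ from $(h_3)$, one has $\int_\Omega H(w_n)\,dx \leq c_1 \|w_n\|^{p/(p-1)} \to 0$, hence
$$\Psi(w_n) = \int_\Omega H(w_n)\,dx - \frac{1}{2}\int_\Omega w_n T w_n\,dx \to 0,$$
contradicting $\Psi(w_n) \geq c > 0$ on $\mathcal{N}$.

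The main obstacle is the H\"older interpolation in the small-amplitude regime: near zero the growth of $h$ is governed by $q$ rather than $p$, so to recover convergence in the natural space $L^{p/(p-1)}(\Omega)$ one must absorb a harmless volume factor $|\Omega|^{(p-q)/(q(p-1))}$ (finite because $\Omega$ is bounded) and exploit that the exponent $p(q-1)/(q(p-1))$ is strictly positive. Everything else reduces to assembling the growth estimates of $h$ and $H$ already proved in $(h_1)$--$(h_3)$ together with the Nehari identity and the variational characterization (\ref{16}).
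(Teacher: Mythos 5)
Your proof is correct, and it follows the same contradiction skeleton as the paper: take $\{w_n\}\subset\mathcal{N}$ with $\int_\Omega w_nTw_n\,dx\to 0$, use the Nehari identity to get $\int_\Omega h(w_n)w_n\,dx\to 0$, conclude $\int_\Omega H(w_n)\,dx\to 0$, and contradict $\Psi(w_n)\ge c>0$ from (\ref{16}). Where you differ is in the mechanism for the middle step. The paper extracts a subsequence with $h(w_n)w_n\to 0$ a.e.\ and an $L^1$ dominating function $g$, deduces $w_n\to 0$ a.e.\ together with the pointwise bound $|w_n|\le K^{-1}g^{(p-1)/p}+f(M)$, and then invokes dominated convergence to get $\int_\Omega H(w_n)\,dx\to 0$. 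You instead work quantitatively: the lower bounds $h(t)t\gtrsim |t|^{p/(p-1)}$ for $|t|$ large and $h(t)t\gtrsim |t|^{q/(q-1)}$ for $|t|$ small (which do follow from $(h_1)$ once the compact intermediate range is absorbed into the constants, as you note) force both partial integrals to vanish, and the H\"older interpolation $\int_{\{|w_n|\le\delta''\}}|w_n|^{p/(p-1)}\le |\Omega|^{(p-q)/(q(p-1))}\bigl(\int_{\{|w_n|\le\delta''\}}|w_n|^{q/(q-1)}\bigr)^{p(q-1)/(q(p-1))}$ upgrades this to $w_n\to 0$ in $L^{p/(p-1)}(\Omega)$, after which $(h_3)$ gives $\int_\Omega H(w_n)\,dx\to 0$ directly. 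Your route buys a genuinely stronger intermediate conclusion (norm convergence rather than a.e.\ convergence along a subsequence), avoids the subsequence extraction and the construction of the dominating function entirely, and makes explicit where the hypothesis $q\le p$ and the boundedness of $\Omega$ enter; the paper's route is softer but requires the slightly delicate pointwise domination argument on the set $A_n$.
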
	
\begin{proof}
Assume by contradiction that there is $(w_n) \subset \mathcal{N}$ such that 
	\begin{equation}\label{17}
	\int_{\Omega}w_nTw_ndx \rightarrow 0.
	\end{equation}
As 
$$
\int_{\Omega}h(w_n)w_ndx=\int_{\Omega}w_nTw_ndx \quad \forall n \in \mathbb{N}
$$
and $h(t)t \geq 0$ for all $t \in \mathbb{R}$, we have that 
$$
h(w_n)w_n \rightarrow 0 \quad \mbox{in} \quad L^1(\Omega).
$$  
Consequently, for some subsequence, still  denoted by itself, 
	\begin{equation} \label{18}
	 h(w_n(x))w_n(x) \rightarrow 0,\ \mbox{a.e. in} \quad \Omega,
	\end{equation}
	and there is $g \in L^1(\Omega)$ such that
	\begin{equation} \label{19}
	 | h(w_n(x))w_n(x)| \leq g(x),\ \mbox{a.e. in} \quad \Omega.
	\end{equation}
Hence, from $(h_1)$ and (\ref{18}),
$$
w_n(x) \rightarrow 0,\ \mbox{a.e. in} \quad \Omega.
$$
Setting 
$$
A_n=\left\lbrace x \in \Omega, |w_n(x)|\geq f(M)\right\rbrace, 
$$ 
by (\ref{1}) and (\ref{19}), there is $K>0$ such that  
$$
\left| w_n(x)\right|\leq \frac{1}{K}g^{\frac{p-1}{p}}(x) \quad  \mbox{a.e. in} \quad A_n.
$$
On the other hand, if $x \notin A_n$, 
$$
\left| w_n(x)\right|\leq f(M).
$$ 
Thereby,
$$
\left| w_n(x)\right|\leq \frac{1}{K}g^{\frac{p-1}{p}}(x)+f(M)\in L^{p/(p-1)}(\Omega),\quad \mbox{a.e. in} \quad \Omega.
$$
The last inequality combined with (\ref{7}) gives   
$$
H(w_n)\leq c\left| w_n(x)\right|^{\frac{p}{p-1}}\leq \left( \frac{1}{k}g^{\frac{p-1}{p}}(x)+f(M)\right)^{\frac{p}{p-1}}\in L^1(\Omega).
$$
As
$$
H(w_n)(x)\rightarrow 0,
$$ 
the Lebesgue's Theorem ensures that  
\begin{equation}\label{20}
\int_{\Omega}H(w_n)dx \rightarrow 0.
\end{equation}
From $(\ref{15})$ and $(\ref{20})$, 
$$
0 < c \leq \Psi(w_n)\rightarrow 0,
$$
which is an absurd.
\end{proof}
\begin{lemma}\label{l5}
There exists $\rho>0$ such that
$$
	\int_{\Omega}w^{\pm}Tw^{\pm}dx \geq \rho, 
$$ 
for all $w\in \mathcal{M}$ with $w^{\pm}\neq 0.$ 
\end{lemma}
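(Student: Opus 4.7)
The plan is to argue by contradiction along the lines of Lemma \ref{l4}, exploiting the fact that the $\mathcal{M}$--constraint $\Psi'(w)w^{\pm}=0$ degenerates into a one--sided Nehari--type inequality for $w^{\pm}$. By the odd symmetry coming from $(f_2)$ it suffices to treat the positive part. Suppose, then, that there exists $(w_n)\subset\mathcal{M}$ with $w_n^+\neq 0$ and $\int_\Omega w_n^+ Tw_n^+\,dx\to 0$.

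Since $h$ is odd and vanishes only at zero, and $w_n^+\cdot w_n^-=0$ a.e., one has the pointwise identity $h(w_n)w_n^+=h(w_n^+)w_n^+$. Linearity of $T$ combined with property $(T_1)$ yields $Tw_n^-\le 0$ in $\Omega$ (apply positivity to the nonnegative function $-w_n^-$), so $\int_\Omega w_n^+ Tw_n^-\,dx\le 0$. The identity $\Psi'(w_n)w_n^+=0$ therefore gives
$$0\le\int_\Omega h(w_n^+)w_n^+\,dx=\int_\Omega w_n^+ Tw_n^+\,dx+\int_\Omega w_n^+ Tw_n^-\,dx\le\int_\Omega w_n^+ Tw_n^+\,dx\longrightarrow 0.$$
Hence $h(w_n^+)w_n^+\to 0$ in $L^1(\Omega)$, and the verbatim domination argument of Lemma \ref{l4} (pass to a subsequence, extract a.e.\ convergence, use the continuity of $h$ with $h(t)t>0$ for $t\neq 0$ to get $w_n^+\to 0$ a.e., and control $w_n^+$ on $\{w_n^+\ge f(M)\}$ via the lower bound in $(h_1)$) produces an $L^{p/(p-1)}$--dominating function. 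Lebesgue's theorem then gives $w_n^+\to 0$ in $L^{p/(p-1)}(\Omega)$.

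The contradiction comes from recycling the mountain--pass lower bound of Lemma \ref{l1}. The same H\"older/splitting manipulation, together with the elementary inequality $A^\beta+B^\beta\ge C(A+B)^\beta$ for $\beta=q(p-1)/[p(q-1)]\ge 1$, yields a constant $C_0>0$ such that $\int_\Omega H(v)\,dx\ge C_0\|v\|^{q/(q-1)}$ whenever $\|v\|$ is below a fixed threshold. Since $h$ is increasing on $[0,\infty)$, one has $H(t)\le h(t)t$ for $t\ge 0$, hence $\int_\Omega H(w_n^+)\,dx\le\int_\Omega h(w_n^+)w_n^+\,dx$. Combining this with the continuity of $T\colon L^{p/(p-1)}(\Omega)\to L^p(\Omega)$, which provides $\int_\Omega w_n^+ Tw_n^+\,dx\le C_1\|w_n^+\|^2$, the first step gives, for $n$ large,
$$C_0\|w_n^+\|^{q/(q-1)}\le\int_\Omega h(w_n^+)w_n^+\,dx\le\int_\Omega w_n^+ Tw_n^+\,dx\le C_1\|w_n^+\|^{2}.$$
Dividing, $\|w_n^+\|^{(q-2)/(q-1)}\ge C_0/C_1>0$, which (since $q>2$ makes the exponent strictly positive) contradicts $\|w_n^+\|\to 0$. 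The main obstacle is precisely the identification of the correct one--sided Nehari--type relation for $w^+$ out of the $\mathcal{M}$--constraint; once that is in place the rest is a transcription of the arguments already developed in Lemmas \ref{l1} and \ref{l4}.
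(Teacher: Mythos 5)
Your argument is correct, but it takes a genuinely different route from the paper. Both proofs start from the same key observation, namely that the $\mathcal{M}$-constraint together with $\int_{\Omega}w^{+}Tw^{-}dx\le 0$ (positivity of $T$ plus the maximum principle) yields the one-sided relation $\int_{\Omega}h(w^{+})w^{+}dx\le\int_{\Omega}w^{+}Tw^{+}dx$. From there the paper is much shorter: it projects $w^{\pm}$ onto the Nehari set, i.e.\ it uses the monotonicity of $h(t)/t$ to produce unique $t_{w^{\pm}}\in(0,1)$ with $t_{w^{\pm}}w^{\pm}\in\mathcal{N}$, and then simply quotes Lemma \ref{l4} and the inequality $t_{w^{\pm}}<1$ to conclude $\int_{\Omega}w^{\pm}Tw^{\pm}dx\ge t_{w^{\pm}}^{2}\int_{\Omega}w^{\pm}Tw^{\pm}dx\ge\rho$. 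You instead rerun a self-contained contradiction argument, recycling the domination machinery of Lemma \ref{l4} to get $\|w_n^{+}\|\to 0$ and then the mountain-pass estimate $\int_{\Omega}H(v)\,dx\ge C_0\|v\|^{q/(q-1)}$ from Lemma \ref{l1} together with $H(t)\le h(t)t$ and $\int_{\Omega}w^{+}Tw^{+}dx\le C_1\|w^{+}\|^{2}$ to reach $C_0\le C_1\|w_n^{+}\|^{(q-2)/(q-1)}\to 0$. Your route is longer but avoids having to justify the existence and uniqueness of the projection parameters $t_{w^{\pm}}$, and in fact the final chain of inequalities shows directly that $\|w^{+}\|$ (hence, via the same chain, $\int_{\Omega}w^{+}Tw^{+}dx$) is bounded below uniformly on $\mathcal{M}$, so the intermediate a.e.-convergence/domination step could even be dispensed with; the paper's route is the more economical one given that Lemma \ref{l4} is already in place and that the projection onto $\mathcal{N}$ is needed elsewhere anyway.
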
	
\begin{proof}
Given $w\in \mathcal{M}$, there are unique  $t_{w^+},t_{w^-}\in (0,1)$ such that 
$$
t_{w^+}w^+,t_{w^-}w^-\in \mathcal{N}.
$$ 
Then, by Lemma \ref{l4}, 
$$
\int_{\Omega}t_{w^+}w^{+}T(t_{w^+}w^+)dx \geq \rho.
$$
Once $t_{w^+}<1,$ we derive that
$$
\int_{\Omega}w^{+}Tw^{+}dx \geq \rho. 
$$
Similarly,
$$
\int_{\Omega}w^{-}Tw^{-}dx \geq \rho, 
$$
finishing the proof. 
\end{proof} 

\begin{lemma}\label{l6}
	Let $v\in L^{\frac{p}{p-1}}(\Omega)$ with $v^{\pm}\neq 0.$ Then, there exist $s,t >0$ such that $\Psi'(tv^++sv^-)v^+=0$ and $\Psi'(tv^++sv^-)v^-=0.$
\end{lemma}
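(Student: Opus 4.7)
The plan is to find $(t,s)$ as an interior maximizer of the two-variable function $\phi:[0,\infty)^2\to\mathbb{R}$ defined by $\phi(t,s):=\Psi(tv^{+}+sv^{-})$. Since $\Psi$ is $C^1$ on $L^{p/(p-1)}(\Omega)$ and $(t,s)\mapsto tv^{+}+sv^{-}$ is linear, $\phi$ is $C^1$, and the chain rule gives $\partial_t\phi(t,s)=\Psi'(tv^{+}+sv^{-})v^{+}$ and $\partial_s\phi(t,s)=\Psi'(tv^{+}+sv^{-})v^{-}$. Hence any interior critical point of $\phi$ furnishes the desired pair $(t,s)$.

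I would first write $\phi$ explicitly. Because $v^{+}$ and $v^{-}$ have disjoint supports, $H$ is even (since $h$ is odd), and $\int w_1 T w_2\,dx$ is symmetric, one obtains
$$
\phi(t,s)=\int_\Omega H(tv^{+})\,dx+\int_\Omega H(sv^{-})\,dx-\tfrac{1}{2}\bigl(t^2A+2tsB+s^2C\bigr),
$$
where $A=\int_\Omega v^{+}Tv^{+}\,dx$, $C=\int_\Omega v^{-}Tv^{-}\,dx$, and $B=\int_\Omega v^{+}Tv^{-}\,dx$. By $(T_1)$, $A,C>0$ and, since $Tv^{-}<0$ in $\Omega$, also $B<0$. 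The key structural observation is that $Q(t,s):=t^2A+2tsB+s^2C$ is positive definite: Cauchy--Schwarz for the positive semidefinite form $(w_1,w_2)\mapsto\int w_1 T w_2\,dx$ yields $B^2\le AC$, with strict inequality because $v^{+}$ and $v^{-}$ are linearly independent (disjoint supports, both nonzero). Combined with $H(r)\le c_1|r|^{p/(p-1)}$ from $(h_3)$ and $p/(p-1)<2$, this gives $\phi(t,s)\to-\infty$ as $t^2+s^2\to\infty$, so the continuous function $\phi$ attains its maximum on $[0,\infty)^2$.

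To conclude, I have to locate the maximum inside the open quadrant. Using the lower bound $H(r)\ge c_2 r^{q/(q-1)}$ on $[0,\delta']$ from $(h_3)$, restricted to $\{x:s|v^{-}(x)|\le\delta'\}$ (on which $|v^{-}|^{q/(q-1)}$ is bounded and so integrable over the bounded domain $\Omega$), yields $\int_\Omega H(sv^{-})\,dx\ge c\,s^{q/(q-1)}$ for all sufficiently small $s>0$; since $q/(q-1)<2$, this gives $\phi(0,s)>0$ for such $s$, whence $\sup\phi>0=\phi(0,0)$. On the edge $\{t=0,\,s>0\}$ the partial derivative equals $\partial_t\phi(0,s)=-sB>0$, so $\phi$ strictly increases as $t$ moves off that edge; symmetrically on $\{s=0,\,t>0\}$. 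Therefore the maximum is attained at some $(t_0,s_0)\in(0,\infty)^2$, where $\nabla\phi(t_0,s_0)=0$, which is precisely the conclusion. The main obstacle is the nonlocal cross term $B<0$: it is exactly what rules out a one-variable Nehari-type argument (as already observed in the paper), and dealing with it requires the strict Cauchy--Schwarz bound $B^2<AC$ both to make $Q$ coercive and to force the maximizer into the open quadrant.
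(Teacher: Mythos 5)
Your proof is correct, but it follows a genuinely different route from the paper's. The paper proves Lemma \ref{l6} by introducing the vector field $V(s,t)=\left(\Psi'(tv^++sv^-)tv^+,\,\Psi'(tv^++sv^-)sv^-\right)$, establishing sign conditions on the four sides of a square $[r,R]^2$ (positivity near $r$ via the lower bound $(h_1)$ on $h$, negativity near $R$ via the upper bound and the quadratic term), and invoking Miranda's theorem. You instead maximize $\phi(t,s)=\Psi(tv^++sv^-)$ over the closed quadrant: coercivity from $H(r)\le c_1|r|^{p/(p-1)}$ with $p/(p-1)<2$ plus positive definiteness of the quadratic form $Q$, then exclusion of the boundary (the origin via $\phi(0,s)>0$ for small $s$, the edges via $\partial_t\phi(0,s)=-sB>0$ and $\partial_s\phi(t,0)=-tB>0$, both driven by the sign of the nonlocal cross term $B=\int_\Omega v^+Tv^-\,dx<0$). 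This is essentially the same global-maximum mechanism the authors deploy later in the proof of Proposition \ref{l7}, transplanted to Lemma \ref{l6}; it buys you the extra information that the pair $(t,s)$ is a global maximizer of $\phi$, whereas Miranda's theorem buys uniform localization of $(s,t)$ in $(r,R)^2$. One small imprecision: strict Cauchy--Schwarz ($B^2<AC$) does not follow from linear independence for a merely positive \emph{semi}definite form; you need that $(w_1,w_2)\mapsto\int_\Omega w_1Tw_2\,dx$ is positive \emph{definite}, which holds because $\int_\Omega wTw\,dx=\int_\Omega|\Delta(Tw)|^2\,dx$ and $Tw=0$ forces $w=\Delta^2(Tw)=0$; this is exactly the fact the paper uses in its Claim inside Proposition \ref{l7}. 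With that one line added, your argument is complete.
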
	

\begin{proof}
	Hereafter, we consider the vetorial field
	$$V(s,t)=\left(\Psi'(tv^++sv^-)tv^+,\Psi'(tv^++sv^-)sv^- \right). $$
	Note that
	\begin{eqnarray*}
		\Psi'(tv^++sv^-)tv^+&=&\int_{\Omega}tv^+h(tv^++sv^-)dx-\int_{\Omega}tv^+T(tv^++sv^-)dx\\
		&=&\int_{\Omega}tv^+h(tv^+)dx-\int_{\Omega}tv^+T(tv^++sv^-)dx.
	\end{eqnarray*}
	Since $v^+\neq 0,$ there is $\alpha>0$ such that $[v^+\geq \alpha]=\{x \in \Omega\,:\, v^+(x) \geq \alpha\}$ 
	has a positive measure. Thereby,
	\begin{equation*}
		\Psi'(tv^++sv^-)tv^+\geq\int_{[v^+\geq \alpha]}tv^+h(tv^+) dx-\int_{\Omega}tv^+T(tv^++sv^-)dx.
	\end{equation*}
	As $h$ is increasing, for $t$ small enough
		\begin{equation*}
		\Psi'(tv^++sv^-)tv^+\geq \int_{[v^+\geq \alpha]}t\alpha(t\alpha)^{1/(q-1)} dx-\int_{\Omega}tv^+T(tv^++sv^-)dx.
	\end{equation*}
Now,  using the linearity of $T$ together with the fact that $\int_{\Omega}v^+T(v^-)dx<0$, we find    
	\begin{equation*}
		\Psi'(tv^++sv^-)tv^+\geq t^{q/(q-1)}\alpha^{q/(q-1)} \left| [v^+\geq \alpha]\right| -\left\| T\right\| t^2\left\| v^+\right\|^2_{L^{p/(p-1)}(\Omega)} .
	\end{equation*}
	Hence, there is $r>0$ small enough such that
	\begin{equation}\label{21}
		\Psi'(rv^++sv^-)rv^+>0, \quad \forall s>0.
	\end{equation}
The same argument works to prove that   
	\begin{equation}\label{22}
		\Psi'(tv^++rv^-)rv^->0, \quad \forall t>0.
	\end{equation}
	On the other hand,
	\begin{equation*}
		\Psi'(tv^++sv^-)tv^+\leq c\int_{\Omega}tv^+\left| tv^+\right|^{1/(p-1)} dx-t^2\int_{\Omega}v^+T(v^+)dx-ts\int_{\Omega}v^+T(v^-)dx.
	\end{equation*}
Once $t,s\geq r,$ it follows that 
	\begin{equation*}
		\Psi'(tv^++sv^-)tv^+\leq c\int_{\Omega}tv^+\left| tv^+\right|^{1/(p-1)} dx-t^2\int_{\Omega}v^+T(v^+)dx-r^2\int_{\Omega}v^+T(v^-)dx,
	\end{equation*}
and so,
$$
\lim_{t\rightarrow +\infty}\Psi'(tv^++sv^-)tv^+=-\infty, \quad \mbox{uniformly in} \quad s \geq r. 
$$ 
Thus, we can to fix  $R>r$ large enough, such that  
	\begin{equation}\label{23}
		\Psi'(Rv^++sv^-)Rv^+<0, \quad \mbox{uniformly in} \quad s \geq r.
	\end{equation} Analogously, 
	\begin{equation}\label{24}
		\Psi'(tv^++Rv^-)Rv^->0 \quad \mbox{uniformly in} \quad t \geq r.
	\end{equation}
	Therefore, from $(\ref{21})-(\ref{24})$, we can apply Miranda Theorem to get $(s,t)\in (r,R)\times (r,R)$ verifying $V(s,t)=0.$
\end{proof}

\vspace{0.5 cm}

In the sequel, for each $v \in L^{\frac{p}{p-1}}(\Omega)$ with $v^{\pm} \neq 0,$ we set \linebreak $h^v:[0,+\infty)\times [0,+\infty) \rightarrow \mathbb{R}$ by
$$
h^v(t,s)=\Psi(tv^++sv^-).
$$ 
\begin{proposition}\label{l7}
	If $w \in \mathcal{M}$, then
	\begin{description}
		\item[i)] $h^w(t,s)<h^w(1,1)=\Psi(w),\ \forall s,t \geq 0\ \mbox{with}\ (s,t)\neq(1,1).$
		\item[ii)]$det(\Phi^w)'(1,1)<0.$
	\end{description}
\end{proposition}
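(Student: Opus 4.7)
The plan is to reduce $h^w(t,s)$ to an explicit scalar expression. Since $w^+,w^-$ have disjoint supports, $h$ is odd (so $H$ is even), and $T$ is bilinear and symmetric, one obtains
\[
h^w(t,s) = A(t) + B(s) - \tfrac{t^2}{2}\alpha - ts\,\gamma - \tfrac{s^2}{2}\beta,
\]
where $A(t):=\int_{\Omega}H(tw^+)\,dx$, $B(s):=\int_{\Omega}H(sw^-)\,dx$, $\alpha:=\int_{\Omega}w^+Tw^+\,dx$, $\beta:=\int_{\Omega}w^-Tw^-\,dx$, and $\gamma:=\int_{\Omega}w^+Tw^-\,dx$. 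By $(T_1)$, $\alpha,\beta>0$; since $w^-\not\equiv 0$, applying $(T_1)$ to $-w^-$ gives $Tw^-<0$ on $\Omega$, hence $\gamma<0$. The hypothesis $w\in\mathcal{M}$ becomes $A'(1)=\alpha+\gamma$ and $B'(1)=\beta+\gamma$.

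For $i)$, I rearrange algebraically as
\begin{align*}
h^w(t,s)-h^w(1,1) &= \bigl\{A(t)-\tfrac{t^2}{2}(\alpha+\gamma)-A(1)+\tfrac{1}{2}(\alpha+\gamma)\bigr\}\\
&\quad + \bigl\{B(s)-\tfrac{s^2}{2}(\beta+\gamma)-B(1)+\tfrac{1}{2}(\beta+\gamma)\bigr\} + \tfrac{\gamma}{2}(t-s)^2.
\end{align*}
The last term is $\leq 0$ (equality iff $t=s$) because $\gamma<0$. For the first brace, define $\psi_r(\tau):=H(\tau r)-\tfrac{\tau^2}{2}rh(r)$ for $r>0$; then this brace equals $\int_{\Omega}[\psi_{w^+(x)}(t)-\psi_{w^+(x)}(1)]\,dx$. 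Since $\psi_r'(\tau) = r[h(\tau r)-\tau h(r)]$ and $(f_5)$ implies that $\rho\mapsto h(\rho)/\rho$ is strictly decreasing on $(0,\infty)$, $\psi_r$ is strictly increasing on $(0,1)$ and strictly decreasing on $(1,\infty)$; combined with $\psi_r(0)=0<\psi_r(1)$ (which follows from $(h_4)$), $\tau=1$ is the unique global maximum on $[0,\infty)$. Hence each brace is $\leq 0$, strictly so when $t\neq 1$ (resp.\ $s\neq 1$); summing yields $h^w(t,s)<h^w(1,1)$ for every $(t,s)\neq(1,1)$.

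For $ii)$, I compute $(\Phi^w)'(1,1)$ entrywise. The Hessian of $h^w$ at $(1,1)$ is
\[
\begin{pmatrix} A''(1)-\alpha & -\gamma \\ -\gamma & B''(1)-\beta \end{pmatrix},
\]
and $(\Phi^w)'(1,1)$ differs from this only by a row/column swap dictated by the ordering of components of $\Phi^w$ (as in Lemma \ref{l6}), which flips the sign of the determinant. The decisive estimate is $A''(1)-\alpha<\gamma<0$: since $(f_5)$ forces $\rho h'(\rho)<h(\rho)$ on $(0,\infty)$,
\[
A''(1)=\int_{\Omega}(w^+)^2 h'(w^+)\,dx<\int_{\Omega}w^+h(w^+)\,dx=\alpha+\gamma,
\]
and analogously $B''(1)-\beta<\gamma$. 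Both diagonal entries are therefore negative with absolute value strictly exceeding $|\gamma|$, giving $(A''(1)-\alpha)(B''(1)-\beta)>\gamma^2$; after the sign flip produced by the swap, $\det(\Phi^w)'(1,1)<0$.

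The main obstacle is extracting the strict pointwise monotonicities from $(f_5)$, namely that $h(\rho)/\rho$ is strictly decreasing and that $\rho h'(\rho)<h(\rho)$ strictly on $(0,\infty)$. Both follow by inversion of $f$, but the strictness must be tracked so that it survives integration against $w^\pm$. A secondary but essential point is the strict negativity $\gamma<0$, which relies on the strong clause of $(T_1)$ (not merely nonnegativity of $\int wTw$) and is used both to make the $(t-s)^2$ term in $i)$ strictly negative off the diagonal and to compare $|\gamma|$ with the diagonal entries in $ii)$.
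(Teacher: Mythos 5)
Your proof is correct, and for item $i)$ it takes a genuinely different and more direct route than the paper. The paper argues indirectly: it first shows $h^w(t,s)\to-\infty$ as $|(t,s)|\to\infty$, so a global maximum $(a,b)$ exists, then proves $a,b>0$ and $a,b\le 1$ by contradiction arguments on the Nehari-type identities, and finally excludes $(a,b)\ne(1,1)$ using the monotonicity of $H(t)-\frac{1}{2}h(t)t$. Your decomposition $h^w(t,s)=A(t)+B(s)-\frac{t^2}{2}\alpha-ts\gamma-\frac{s^2}{2}\beta$ (valid because $w^+$ and $w^-$ have disjoint supports and $H(0)=0$), followed by completing the square to isolate the coupling as $\frac{\gamma}{2}(t-s)^2\le 0$ and reducing each remaining brace to the scalar problem $\max_\tau \psi_r(\tau)$ with unique maximizer $\tau=1$, proves the strict global maximality at $(1,1)$ in one stroke, with no existence step and no case analysis; the price is only the pointwise strict monotonicity of $h(\rho)/\rho$, which you correctly extract from $(f_5)$ by inversion. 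Your argument also silently repairs a cosmetic flaw in the paper's chain of inequalities, where two of the displayed strict inequalities are in fact equalities for disjoint supports.

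For item $ii)$ your computation is in substance the same as the paper's: your $A''(1)-\alpha<\gamma<0$ and $B''(1)-\beta<\gamma<0$ are exactly the paper's $G(w^\pm)<\int_\Omega w^+Tw^-\,dx<0$, both resting on $th'(t)<h(t)$ from $(h_4)$/$(f_5)$. The one delicate point is the sign at the end, and here you are more careful than the paper: the paper writes $\det(\Phi^w)'(1,1)=G(w^+)G(w^-)-\left(\int_\Omega w^+Tw^-dx\right)^2$, which together with its own bounds gives a \emph{positive} quantity (as it must, since the Hessian of $h^w$ at a global maximum cannot have negative determinant). The stated negativity only holds for the Jacobian of the vector field $V(s,t)=\left(\Psi'(tv^++sv^-)tv^+,\,\Psi'(tv^++sv^-)sv^-\right)$ of Lemma \ref{l6}, whose component ordering relative to the variables $(s,t)$ produces exactly the column swap you invoke. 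Since $\Phi^w$ is never defined in the paper, your explicit accounting for this swap is the step that makes the sign consistent; in any case the quantitative content (determinant nonzero, with the two diagonal entries dominating $|\gamma|$) is what the degree argument in Section 4 actually needs, and you have established it. Two small points to track if you write this out in full: the strict inequality $\int_\Omega h'(w^+)(w^+)^2dx<\int_\Omega h(w^+)w^+dx$ needs $th'(t)<h(t)$ on a set where $w^+$ takes values of positive measure (you flag this), and for the brace involving $w^-$ you should say explicitly that the evenness of $H$ and oddness of $h$ transfer the monotonicity of $\psi_r$ to negative $r$.
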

\begin{proof}

First of all, we need to show the following inequality
\begin{claim}\label{l8}
	If $w \in L^{p/(p-1)}$ with $w^{\pm}\neq 0$, then
	$$
	\left(\int_{\Omega}w^+Tw^-dx\right)^2<\left(\int_{\Omega}w^+Tw^+dx\right)\left(\int_{\Omega}w^-Tw^-dx\right).
	$$
\end{claim} 
	Indeed, by positiveness of  $T$, we have the inequality below
	$$
	\int_{\Omega}(tw^++sw^-)T(tw^++sw^-)dx>0,\, (t,s) \neq (0,0),
	$$
	which combined with the symmetry of $T$ gives  
	$$
	t^2\int_{\Omega}w^+Tw^+dx+2st\int_{\Omega}w^+Tw^-dx+s^2\int_{\Omega}w^-Tw^-dx>0.
	$$
	Then, for $s \neq 0,$ 
	$$
	\left( \frac{t}{s}\right)^2 \int_{\Omega}w^+Tw^+dx+2\left( \frac{t}{s}\right)\int_{\Omega}w^+Tw^-dx+\int_{\Omega}w^-Tw^-dx>0.
	$$
	Making $X=\frac{t}{s}$, we deduce that  
	$$ 
	X^2 \int_{\Omega}w^+Tw^+dx+2X\int_{\Omega}w^+Tw^-dx+\int_{\Omega}w^-Tw^-dx>0, \quad \forall X \in \mathbb{R}.
	$$
	From this, the polynomial 
	$$
	P(X)= X^2 \int_{\Omega}w^+Tw^+dx+2X\int_{\Omega}w^+Tw^-dx+\int_{\Omega}w^-Tw^-dx
	$$ 
	does not have real roots. As $\int_{\Omega}w^+Tw^+dx>0$, there is a $X_0 \in \mathbb{R}, $ such that 
\begin{equation} \label{25}
	P(X)\ge P(X_0)>0, \quad  \forall X \in \mathbb{R}.
\end{equation}
	Since $w\in \mathcal{M},$ we know that $\Psi'(w)w^+=\Psi'(w)w^-=0$. Then, $(1,1)$ is a critical point of $h^w$ and the equalities below hold
$$
\int_{\Omega}h(w^+)w^+dx=\int_{\Omega}w^+Tw^+dx+\int_{\Omega}w^+Tw^-dx
$$
and
$$
\int_{\Omega}h(w^-)w^-dx=\int_{\Omega}w^-Tw^+dx+\int_{\Omega}w^-Tw^-dx.
$$
On the other hand, from $(\ref{1})$ and $(\ref{2})$,  
	\begin{align*}
	h^w(t,s)\leq Ct^{p/(p-1)}&\int_{\Omega}\left| w^+\right|^{p/(p-1)}dx+Cs^{p/(p-1)}\int_{\Omega}\left| w^-\right|^{p/(p-1)}dx&\\
	&-t^2\int_{\Omega}w^+Tw^+dx-2ts\int_{\Omega}w^-Tw^+dx-s^2\int_{\Omega}w^-Tw^-dx ,&
	\end{align*}
	where $C$ is a positive constant. The above estimate together with (\ref{25}) guarantees that  
	$$
	h^w(t,s)\rightarrow -\infty,\ \mbox{when}\ \left|(s,t)\right| \rightarrow +\infty.
	$$
Gathering the continuity of $h^w$ with the last limit, we deduce that $h^w$ assumes a global maximum in some point $(a,b).$
	
	Next, we will show that $a,b>0.$ Indeed, if $b=0$
	$$
	\Psi(aw^+) \geq \Psi(tw^+), \quad \forall t >0, 
	$$
	and 
	$$
	\dfrac{\partial h^w}{\partial t}(a,0)=0.
	$$
Then, 
	$$
	\Psi'(aw^+)w^+=0,
	$$ 
or equivalently,
	\begin{equation} \label{26}
	\frac{1}{a}\int_{\Omega}h(aw^+)w^+dx=\int_{\Omega}w^+T(w^+)dx.
	\end{equation}
Recalling that $\Psi'(w)w^+=0,$ we know that 
\begin{equation} \label{27}
	\int_{\Omega}h(w^+)w^+dx<\int_{\Omega}w^+T(w^+)dx.
	\end{equation}
From  (\ref{26})-(\ref{27}),
	$$
	\int_{\Omega}\left[ \frac{h(aw^+)}{aw^+}-\frac{h(w^+)}{w^+}\right](w^+)^2 dx>0.
	$$
Using the fact that $\frac{h(t)}{t}$ is decreasing for $t>0$, the above inequality gives that $a <1.$ On the other hand, note that 
$$
	h^w(a,0)=\Psi(aw^+)=\Psi(aw^+)-\frac{1}{2}\Psi'(aw^+)aw^+ = \int_{\Omega}\left(H(aw^+)-\frac{1}{2}h(aw^+)aw^+\right)dx.
$$
Since $H(t)-\frac{1}{2}h(t)t$ is increasing for $t>0$, $a \in (0,1)$ and  
$$
\int_{\Omega}\left(H(w^-)-\frac{1}{2}h(w^-)w^-\right)dx>0,
$$ 
we have that 
	\begin{align*}
	h^w(a,0)&<\int_{\Omega}\left(H(w^+)-\frac{1}{2}h(w^+)w^+\right)dx&\\
	&<\int_{\Omega}\left(H(w^+)-\frac{1}{2}h(w^+)w^+\right)dx+\int_{\Omega}\left(H(w^-)-\frac{1}{2}h(w^-)w^-\right)dx&\\
	&<\int_{\Omega}\left(H(w^++w^-)-\frac{1}{2}h(w^++w^-)(w^++w^-)\right)dx&\\
	&< \Psi(w)-\frac{1}{2}\Psi'(w)w=\Psi(w)=h^w(1,1),&
	\end{align*}
obtaining a contradiction, because $(a,0)$ is a global maximum point for $h^w$. The same type of argument 
	shows that $a>0$, showing the claim.

The second claim is that $0<a,b\leq 1.$ In fact, since $(a,b)$ is a critical point of  $h^w,$ we have the equalities  
$$
\Psi'(aw^++bw^-)aw^+=0 \quad \mbox{and} \quad \Psi'(aw^++bw^-)bw^-=0 
$$
which load to 
$$
\quad a^2\int_{\Omega}w^+Tw^+dx+ab\int_{\Omega}w^+Tw^-dx=a\int_{\Omega}h(aw^+)w^+dx
$$
and
$$
b^2\int_{\Omega}w^-Tw^-dx+ab\int_{\Omega}w^+Tw^-dx=b\int_{\Omega}h(bw^+)w^+dx.
$$
Without loss of generality, we will suppose that $a \geq b.$ Then,  
$$
ab\int_{\Omega}w^+Tw^-dx\geq a^2\int_{\Omega}w^+Tw^-dx.
$$
Thereby, 
$$
\int_{\Omega}w^+Tw^+dx+\int_{\Omega}w^+Tw^-dx\leq\int_{\Omega}\frac{h(aw^+)}{aw^+}(w^+)^2dx,
$$
and  
$$
\int_{\Omega}w^+Tw^+dx+\int_{\Omega}w^+Tw^-dx=\int_{\Omega}\frac{h(w^+)}{w^+}(w^+)^2dx.
$$
Gathering the above information, we get the inequality
$$
0\leq \int_{\Omega}\left[\frac{h(aw^+)}{aw^+}-\frac{h(w^+)}{w^+}\right](w^+)^2,
$$
which combined with the fact that 	$\frac{h(t)}{t}$ is decreasing for $t>0$ gives $a \leq 1.$
	
	To conclude the proof of item $i)$, we will show that $h^w$ does not have a global maximum in $\left[0,1\right]\times\left[0,1\right]\setminus \left\lbrace (1,1)\right\rbrace .$ Note that
	\begin{align*}
	h^w(a,b)&=\Psi(aw^++bw^-)-\frac{1}{2}\Psi'(aw^++bw^-)(aw^++bw^-)&\\
	&=\int_{\Omega}\left[H(aw^+)-h(aw^+)aw\right]dx+\int_{\Omega}\left[H(bw^-)-h(bw^-)bw^-\right]dx.&
	\end{align*}  
Therefore, 
	\begin{align*}
	h^w(a,b)&< \int_{\Omega}\left[H(w^+)-h(w^+)w\right]dx+\int_{\Omega}\left[H(w^-)-h(w^-)bw^-\right]dx&\\
	& <\int_{\Omega}\left[H(w^++w^-)-h(w^++w^-)(w^++w^-)\right]dx=h^w(1,1),&
	\end{align*}
	proving $i)$. 
	
	To show the item $ii)$, note that 
	$$
	det(\Phi^w)'(1,1)=G(w^+)G(w^-)-\left( \int_{\Omega} w^+Tw^-dx\right)^2,
	$$
	where 
	$$
	G(v)=\int_{\Omega}h'(v)v^2dx-\int_{\Omega}vTvdx.
	$$
	Once, $w \in \mathcal{M}$ and $\int_{\Omega}w^-Tw^+dx=\int_{\Omega}w^+Tw^-dx$, we derive that  
	$$
	\int_{\Omega}w^+Tw^+dx=\int_{\Omega}h(w^+)w^+dx-\int_{\Omega}w^+Tw^-dx
	$$
	and
	$$
	\int_{\Omega}w^-Tw^-dx=\int_{\Omega}h(w^-)w^-dx-\int_{\Omega}w^+Tw^-dx.
	$$
	Hence,
	$$
	G(w^+)=\int_{\Omega}\left[ h'(w^+)(w^+)^2-h(w^+)w^+\right] dx+\int_{\Omega}w^+Tw^-dx.
	$$
	By ${\bf (h_4)}$,  
	$$
	h(t)t>h'(t)t^2,\quad \forall t \neq 0
	$$
	and so, 
	$$
	G(w^+)<\int_{\Omega}w^+Tw^-dx.
	$$
	Similarly,
	$$
	G(w^-)<\int_{\Omega}w^+Tw^-dx.
	$$
From this, 
	$$
	det(\Phi^w)'(1,1)<0
	$$
	\hspace{13.2cm}
\end{proof}

\begin{corollary}\label{c1}
	Let $v\in L^{\frac{p}{p-1}}(\Omega)$ be a function verifying
	$$
	v^{\pm}\neq 0 \quad  \mbox{and} \quad  \Psi'(v)(v^{\pm})\le 0.
	$$
	Then, there are $t,s \in [0,1]$ such that
	$$
	tv^++sv^- \in \mathcal{M}.
	$$
\end{corollary}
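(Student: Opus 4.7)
The plan is to apply Miranda's theorem to a suitable vector field on the square $[r,1]^2$, mirroring the proof of Lemma \ref{l6}, but with the outer bound $R$ there replaced by $1$; the sign condition on the upper faces will be supplied by the extra hypothesis $\Psi'(v)(v^{\pm})\leq 0$. For fixed $v$ with $v^{\pm}\neq 0$, I would set
$$
V(t,s)=\bigl(\Psi'(tv^++sv^-)(tv^+),\ \Psi'(tv^++sv^-)(sv^-)\bigr).
$$
Since $v^+$ and $v^-$ have disjoint supports and $h(0)=0$, the first component simplifies to
$$
V_1(t,s)=\int_{\Omega}h(tv^+)(tv^+)\,dx-t^2\!\int_{\Omega}v^+Tv^+\,dx-ts\!\int_{\Omega}v^+Tv^-\,dx,
$$
with a symmetric formula for $V_2$. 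A preliminary observation, used repeatedly, is that applying $(T_1)$ to $-v^-\geq 0$ forces $Tv^-\leq 0$ in $\Omega$, so $\int_{\Omega}v^+Tv^-\,dx\leq 0$.

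On the ``lower'' faces $\{t=r\}$ and $\{s=r\}$, I would reuse the estimate from (\ref{21})--(\ref{22}) almost verbatim: by $(h_1)$ the nonlinear term is bounded below by a constant times $r^{q/(q-1)}$, the quadratic self-term is $O(r^2)$, and the cross-term $-rs\int v^+Tv^-\,dx\geq 0$ only helps. As $q/(q-1)<2$, choosing $r>0$ small would give $V_1(r,s)>0$ uniformly in $s\in[r,1]$ and $V_2(t,r)>0$ uniformly in $t\in[r,1]$. On the ``upper'' face $\{t=1\}$, a direct expansion produces the identity
$$
V_1(1,s)=\Psi'(v)(v^+)+(1-s)\!\int_{\Omega}v^+Tv^-\,dx,
$$
so $\Psi'(v)(v^+)\leq 0$ together with $1-s\geq 0$ and the sign of the cross-term yields $V_1(1,s)\leq 0$ for every $s\in[r,1]$. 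The symmetric identity (invoking also the symmetry $(T_2)$) gives $V_2(t,1)\leq 0$ for every $t\in[r,1]$.

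The field $V$ is continuous on $[r,1]^2$ since $\Psi\in C^1(L^{p/(p-1)}(\Omega),\R)$, and it satisfies the opposite-sign configuration on opposite faces required by Miranda's theorem, so some $(t_0,s_0)\in[r,1]^2$ will satisfy $V(t_0,s_0)=0$. Because $t_0,s_0\geq r>0$, dividing gives $\Psi'(t_0v^++s_0v^-)(v^{\pm})=0$, and the disjointness of supports produces $(t_0v^++s_0v^-)^{+}=t_0v^+\neq 0$ and $(t_0v^++s_0v^-)^{-}=s_0v^-\neq 0$. Hence $t_0v^++s_0v^-\in\mathcal{M}$ with $t_0,s_0\in(0,1]$, as claimed. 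The step I expect to be the main obstacle is the upper-face estimate: the hypothesis furnishes a sign condition only at the single point $v=1\cdot v^++1\cdot v^-$, and one has to propagate it along the entire edges $\{t=1\}$ and $\{s=1\}$. The mechanism that makes this work is precisely the nonpositivity of $\int v^+Tv^-$, which forces $s\mapsto V_1(1,s)$ and $t\mapsto V_2(t,1)$ to be monotone in the right direction.
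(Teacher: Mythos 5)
Your argument is correct, but it reaches the conclusion by a genuinely different mechanism than the paper. The paper's proof of the corollary is a pointer to the proof of Proposition \ref{l7}: there one maximizes $h^v(t,s)=\Psi(tv^++sv^-)$ over the quadrant, shows the maximizer $(a,b)$ satisfies $a,b>0$ (so it is a critical point of $h^v$, hence $av^++bv^-\in\mathcal{M}$), and then deduces $a,b\le 1$ from the hypothesis $\Psi'(v)(v^{\pm})\le 0$ via the monotonicity of $h(t)/t$. You instead rerun the Miranda argument of Lemma \ref{l6} on the smaller box $[r,1]^2$, keeping the lower-face estimates of type (\ref{21})--(\ref{22}) and replacing the large-$R$ upper-face estimates by the exact affine identity $V_1(1,s)=\Psi'(v)(v^+)+(1-s)\int_{\Omega}v^+Tv^-\,dx$ (and its mirror via $(T_2)$), whose nonpositivity along the whole edge follows from the hypothesis together with $\int_{\Omega}v^+Tv^-\,dx\le 0$, which in turn follows from $(T_1)$ applied to $-v^-\ge 0$, exactly as the paper notes at the start of Section 3. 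I checked the identity and it is correct; the non-strict form of the Poincar\'e--Miranda theorem suffices for the mixed strict/non-strict boundary signs you obtain, and the resulting zero lies in $[r,1]^2$, giving $t_0,s_0\in(0,1]\subset[0,1]$ as required. What your route buys is that it bypasses both the global-maximization step and the $h(t)/t$-monotonicity argument for this corollary, localizing all the work in an explicit boundary computation; what the paper's route buys is economy, since the same computation in Proposition \ref{l7} simultaneously yields the strict inequality $h^w(t,s)<h^w(1,1)$ of item (i), which is needed later in the minimization argument of Section 4, so the corollary comes for free there.
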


\begin{proof}
	An immediate consequence of the arguments used in the proof of Lemma \ref{l7}.
\end{proof}
		%%%%%%%%%%%%%%%%%%%%%%%%%%%%%%%%%%%%%%%%%%%%%%%%%%%%%%%%%%%%%%%%%%%%%%%%%%%%%%%%%%%%%%%%%%%%%%%%%%%%%%%%%%%%%%%%%%%%%%%%%%%%%%%%%%%%%%%%%%%%%%%%%%%%%%%%%%%%%%%%
		%Existência de Solução Nodal
		%%%%%%%%%%%%%%%%%%%%%%%%%%%%%%%%%%%%%%%%%%%%%%%%%%%%%%%%%%%%%%%%%%%%%%%%%%%%%%%%%%%%%%%%%%%%%%%%%%%%%%%%%%%%%%%%%%%%%%%%%%%%%%%%%%%%%%%%%%%%%%%%%%%%%%%%%%%%%%%%
	\section{Proof of the Theorem \ref{Main Theorem} }
	Hereafter, we denote by $c_{\mathcal{M}}$ the infimum of $\Psi$ in $\mathcal{M}$, that is,
	$$
	c_{\mathcal{M}}=\inf_{w \in \mathcal{M}}\Psi(w).
	$$
	As $\mathcal{M}\subset\mathcal{N},$  we must have  
	$$
	c_{\mathcal{M}}\geq c>0.
	$$ 
	Let  $\{w_n\} \subset \mathcal{M}$ be  such that 
	$$
	\Psi(w_n)\rightarrow c_{\mathcal{M}}.
	$$
	Using well known arguments, we can assume that $\{w_n\}$ is a bounded sequence of $L^{\frac{p}{p-1}}(\Omega)$. Hence, passing to a subsequence if necessary,
$$
w_n \rightharpoonup w\ \mbox{in}\ L^{\frac{p}{p-1}}(\Omega),
$$
for some $w \in L^{\frac{p}{p-1}}(\Omega)$.

\begin{claim}
			$w_n^{\pm} \rightharpoonup w^{\pm}\ \mbox{in}\ L^{\frac{p}{p-1}}(\Omega).$
\end{claim}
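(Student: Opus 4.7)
The strategy is to pass to further subsequences so that $w_n^{+}\rightharpoonup\alpha$ and $w_n^{-}\rightharpoonup\beta$ in $L^{\frac{p}{p-1}}(\Omega)$ (possible since $|w_n^\pm|\le|w_n|$) and then identify $\alpha=w^{+}$, $\beta=w^{-}$. From the uniqueness of weak limits applied to $w_n=w_n^{+}+w_n^{-}\rightharpoonup w$ one gets $\alpha+\beta=w$, and from the weak closedness in $L^{\frac{p}{p-1}}(\Omega)$ of the cones $\{v\ge 0\}$ and $\{v\le 0\}$ one gets $\alpha\ge 0$ and $\beta\le 0$ a.e. So all that is left is to show that $\alpha$ and $-\beta$ have essentially disjoint supports, i.e.\ $\alpha\beta=0$ a.e.

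The plan to get this is to obtain, along a subsequence, the pointwise convergence $w_n\to w$ a.e.\ in $\Omega$; the continuity of $t\mapsto t^{\pm}$ then yields $w_n^{\pm}\to w^{\pm}$ a.e., while the uniform $L^{\frac{p}{p-1}}$-bound combined with the reflexivity of $L^{\frac{p}{p-1}}(\Omega)$ upgrades this to the desired weak convergence.

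To obtain a.e.\ convergence of $w_n$, I would first exploit the compactness of $T:L^{\frac{p}{p-1}}(\Omega)\to L^p(\Omega)$ established earlier in the paper, which gives $Tw_n\to Tw$ strongly in $L^p(\Omega)$ and, along a subsequence, a.e. Then I would upgrade $\{w_n\}$ from a mere minimizing sequence in $\mathcal{M}$ to a Palais--Smale sequence for the unconstrained $\Psi$ via Ekeland's variational principle applied to $\Psi|_{\mathcal{M}}$. Here the non-degeneracy condition $\det(\Phi^w)'(1,1)<0$ from Proposition \ref{l7} is crucial: it implies that the constraints $\Psi'(w)w^{+}=0$ and $\Psi'(w)w^{-}=0$ cut out a $C^1$-manifold of codimension $2$ near each $w_n$ and that the Lagrange multipliers extracted from the tangential PS condition tend to zero, so that $\Psi'(w_n)\to 0$ in $(L^{\frac{p}{p-1}}(\Omega))^{*}=L^p(\Omega)$. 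Finally, Lemma \ref{l3} ($\Psi$ satisfies $(PS)$) yields $w_n\to w$ strongly in $L^{\frac{p}{p-1}}(\Omega)$, and hence a.e.\ along a subsequence.

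The main obstacle is the Ekeland/Lagrange-multiplier upgrade: one needs the determinant condition of Proposition \ref{l7} to hold uniformly along the sequence so that the multipliers are well-defined, bounded, and vanishing in the limit. Once this is secured, strong convergence of $w_n$ and consequently the claimed weak convergence $w_n^{\pm}\rightharpoonup w^{\pm}$ follow immediately.
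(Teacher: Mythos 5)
Your setup (extracting weak limits $\alpha,\beta$ of $w_n^{\pm}$, noting $\alpha+\beta=w$, $\alpha\ge 0\ge\beta$, and reducing the claim to $\alpha\beta=0$ a.e.) matches the paper's, but the step you rely on to finish --- upgrading the minimizing sequence to a Palais--Smale sequence for the unconstrained $\Psi$ via Ekeland plus Lagrange multipliers on $\mathcal{M}$ --- has a genuine gap. The set $\mathcal{M}$ is not a $C^1$ submanifold of $L^{\frac{p}{p-1}}(\Omega)$: the constraint functions $w\mapsto \Psi'(w)w^{+}$ and $w\mapsto \Psi'(w)w^{-}$ involve the maps $w\mapsto w^{\pm}$, which are not differentiable, so the classical Lagrange-multiplier/tangential-PS machinery is not available. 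The condition $\det(\Phi^w)'(1,1)<0$ of Proposition \ref{l7} only concerns the Jacobian of the two-dimensional restriction $(t,s)\mapsto(\Psi'(tw^{+}+sw^{-})tw^{+},\Psi'(tw^{+}+sw^{-})sw^{-})$; it says nothing about the full differential of the constraints in the infinite-dimensional space and cannot be used to produce vanishing multipliers. This is precisely the known obstruction with the nodal Nehari set, and it is why the paper (following Alves--Souto) postpones the question of criticality to a separate quantitative deformation argument at the very end of Section 4. Note also that if your step were available, it would already yield $\Psi'(w_n)\to 0$ and hence, via Lemma \ref{l3}, the existence of a critical point at level $c_{\mathcal{M}}$, rendering the rest of the paper's Section 4 superfluous --- a strong sign the shortcut is not legitimate.

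The paper's actual route to the a.e.\ convergence is elementary and bypasses any PS argument: by compactness of $T$ and the maximum principle, $Tw_n^{+}\to Tw_1$ in $L^{p}(\Omega)$ with $Tw_1>0$ in $\Omega$, and $w_n^{+}Tw_n^{+}\to w_1Tw_1$ in $L^1(\Omega)$; writing $w_n^{+}=\dfrac{w_n^{+}Tw_n^{+}}{Tw_n^{+}}$ and passing to a.e.\ convergent subsequences gives $w_n^{+}\to w_1$ a.e.\ (and similarly $w_n^{-}\to w_2$ a.e.). Then on $\{w_1>0\}$ one has $w_n=w_n^{+}$ for large $n$, and on $\{w_2<0\}$ one has $w_n=w_n^{-}$, which forces $w_1w_2=0$ a.e.\ and hence $w_1=w^{+}$, $w_2=w^{-}$. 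You would need to replace your Ekeland step with an argument of this kind (or some other direct proof of a.e.\ convergence of $w_n^{\pm}$) for the proof to close.
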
	
		Indeed, as $\{w_n\}$ is bounded in $L^{\frac{p}{p-1}}(\Omega)$,  $\{w_n^+\}$ and $\{w_n^-\}$ are also bounded. By reflexivity of $L^{\frac{p}{p-1}}(\Omega)$, there exist $w_1,w_2 \in L^{\frac{p}{p-1}}(\Omega)$ such that
		$$
		w_n^+\rightharpoonup w_1 \quad \mbox{and} \quad  w_n^- \rightharpoonup w_2 \quad \mbox{in} \quad L^{\frac{p}{p-1}}(\Omega) 
		$$
		with 
		$$
		w_1(x) \geq 0, \quad w_2(x) \leq 0 \quad \mbox{a.e. in} \quad \Omega \quad \mbox{and} \quad w=w_1+w_2. 
		$$
	By Lemma \ref{l5}, 
		$$
		\int_{\Omega}w_1Tw_1dx,\int_{\Omega}w_2Tw_2dx\geq \rho
		$$
	implying that 
		$$
		w_1, w_2 \neq 0.
		$$ 
		Combining the compactness of $T$ with the maximum principles found in \cite{G}, we have 
		$Tw_n^+ \rightarrow Tw_1$ in $L^{p}(\Omega)$ and $Tw_1>0$ in $\Omega.$ Then, 
		\begin{equation}\label{28}
		Tw_n^+(x) \rightarrow Tw_1(x)\quad \mbox{a.e. in}\ \Omega.
		\end{equation}
		Still from  the compactness of $T$ 
		$$
		w_n^+Tw_n^+ \rightarrow w_1Tw_1\ \mbox{in}\ L^1(\Omega),
		$$ 
	from where it follows that
		\begin{equation}\label{29}
		w_n^+(x)Tw_n^+(x) \rightarrow w_1(x)Tw_1(x) \quad \mbox{a.e. in}\quad  \Omega,
		\end{equation}
		for some subsequence. Now (\ref{28}) and (\ref{29}) combine to give
		$$
		w_n^+(x)=\frac{w_n^+(x)Tw_n^+(x)}{Tw_n^+(x)}\rightarrow \frac{w_1(x)Tw_1(x)}{Tw_1(x)}=w_1(x) \quad \mbox{a.e. in} \quad \Omega. 
		$$
		Similarly,  
		$$
		w_n^-(x)\rightarrow w_2(x) \quad \mbox{a.e. in} \quad \Omega.
		$$
		Notice that, if $w_1(x)>0$, then, $w_n^+(x)>0,$ for $n$ large enough. Hence,
		$$
		w_n(x)=w_n^+(x)\rightarrow w_1(x).
		$$
the same argument works to prove that 
		$$
		w_n(x)=w_n^-(x)\rightarrow w_2(x) \quad \mbox{if} \quad w_2(x)<0.
		$$
	From this, $w_1(x)w_2(x)=0$\ a.e. in  $\Omega$, and  
		$$
		\left\lbrace x \in \Omega; w_2(x)<0\right\rbrace \cap\left\lbrace x \in \Omega; w_1(x)>0\right\rbrace=\emptyset.
		$$
	Therefore,
		$$
		w^+(x)=max\{w(x),0\}=max\{w_1(x)+w_2(x),0\}=w_1(x) \quad \mbox{a.e. in} \quad \Omega
		$$
		and
		$$
		w^-(x)=min\{w(x),0\}=min\{w_1(x)+w_2(x),0\}=w_2(x) \quad \mbox{a.e. in} \quad \Omega,
		$$ 
		finishing the proof of the claim.
		
		From Lemma \ref{l6}, there exist $t,s>0$ such that
		$$
		\Psi'(tw^++sw^-)w^+=\Psi'(tw^++sw^-)w^-=0.
		$$ 
		Now, we will show that $t,s \leq 1$. As  $\Psi'(w_n)w_n^{\pm}=0,$ 
		$$
		\int_{\Omega }w_n^{+}h(w_n^{+})dx=\int_{\Omega }w_n^{+}Tw_n^+dx+\int_{\Omega }w_n^{-}Tw_n^+dx
		$$
		and
		$$
		\int_{\Omega }w_n^{-}h(w_n^{-})dx=\int_{\Omega }w_n^{-}Tw_n^-dx+\int_{\Omega }w_n^{-}Tw_n^+dx.
		$$
		Taking the limit in the above equalities, we obtain
		$$
		\int_{\Omega }w^{+}h(w^{+})dx= \int_{\Omega }w^{+}Tw^+dx+\int_{\Omega }w^{-}Tw^+dx
		$$
		and
		$$
		\int_{\Omega }w^{-}h(w^{-})dx=\int_{\Omega }w^{-}Tw^-dx+\int_{\Omega }w^{-}Tw^+dx.
		$$
		Since $\Psi'(tw^++sw^-)tw^+=0,$ we know that 
		$$
		\int_{\Omega }h(tw^+)tw^+dx=t^2\int_{\Omega }w^{+}Tw^+dx+ts\int_{\Omega }w^{-}Tw^+dx.
		$$ 
		Supposing $t\geq s, $ we find the inequality below  
		$$
		\int_{\Omega }\frac{h(tw^{+})}{tw^{+}}(w^{+})^2dx\geq\int_{\Omega }w^{+}Tw^+dx+\int_{\Omega }w^{-}Tw^+dx\geq \int_{\Omega }\frac{h(w^{+})}{w^+}(w^{+})^2dx,
		$$
		that is,
		$$
		\int_{\Omega }\left[\frac{h(tw^{+})}{tw^{+}}(w^{+})^2 - \frac{h(w^{+})}{w^+}(w^{+})^2\right] dx\geq 0.
		$$
		Once $\frac{h(t)}{t}$ is decreasing for $t>0$, the last inequality ensures that $t\leq 1$, and so, $s\leq 1$. 
		
		Our next step is to show that $\Psi(tw^++sw^-)=c_{\mathcal{M}}.$ To this end, as $tw^++sw^-\in \mathcal{M}$, we must have 
		\begin{align*}
		c_{\mathcal{M}}&\leq \Psi(tw^++sw^-)=\Psi(tw^++sw^-)-\frac{1}{2}\Psi'(tw^++sw^-)(tw^++sw^-)&\\
		&\leq\int_{\Omega}\left[H(tw^++sw^-)-\frac{1}{2}h(tw^++sw^-)(tw^++sw^-)\right]dx,&\\
		&\leq \int_{\Omega}\left[H(tw^+)-\frac{1}{2}h(tw^+)(tw^+)\right]dx+\int_{\Omega}\left[H(sw^-)-\frac{1}{2}h(sw^-)(sw^-)\right]dx.
		\end{align*} 
		Using again that $H(t)-\frac{1}{2}h(t)t$ increasing for $t>0$, we obtain  
		\begin{align*}
		c_{\mathcal{M}}&\leq\int_{\Omega}\left[H(w^+)-\frac{1}{2}h(w^+)(w^+)\right]dx+\int_{\Omega}\left[H(w^-)-\frac{1}{2}h(w^-)(w^-)\right]dx,&
		\end{align*}
		By Fatou's Lemma
		\begin{align*}
		c_{\mathcal{M}}&\leq \liminf_{n\rightarrow +\infty}\int_{\Omega}\left[H(w_n^+)-\frac{1}{2}h(w_n^+)(w_n^+)\right]dx+\liminf_{n\rightarrow +\infty}\int_{\Omega}\left[H(w_n^-)-\frac{1}{2}h(w_n^-)(w_n^-)\right]dx&\\
		&\leq \liminf_{n\rightarrow +\infty}\int_{\Omega}\left[H(w_n^++w_n^-)-\frac{1}{2}h(w_n^++w_n^-)(w_n^++w_n^-)\right]dx,& \\
		&\leq \liminf_{n\rightarrow +\infty}\left[ \Psi(w_n)-\frac{1}{2}\Psi'(w_n)w_n\right]= \liminf_{n\rightarrow +\infty} \Psi(w_n)=c_{\mathcal{M}},& 
		\end{align*}
showing that 
		$$
		c_{\mathcal{M}}=\Psi(tw^++sw^-).
		$$
Setting 	$w_0=tw^++sw^- \in \mathcal{M}$, it follows that
$$
w_0\in \mathcal{M} \quad \mbox{and} \quad \Psi(w_0)=c_\mathcal{M}.
$$

Now, using Proposition \ref{l7} and the same arguments found in \cite[Section 2]{Alves1}, we can infer that $w_0$ is a critical point of  $\Psi$. Thus,  $u=Tw_0$ is a nodal ground state solution for $(P)$.

\vspace{0.2cm}

\section{Final comments} In the present paper we have opted to assume the condition $(f_2)$ to avoid more technicalities, because our main intention were to show in details the idea of the method.


\begin{thebibliography}{99}
	
	\bibitem{Alves2} C.O. Alves, {\it Existence of periodic solutions for a class of systems involving nonlinear wave equations}, Commun. Pure Appl.  Anal. 04 (2005), 487-498.	

	\bibitem {ACM} C.O. Alves, P.C. Carri\~ao and O.H. Miyagaki, 
	{\it On the existence of positive solutions of a perturbed Hamiltonian system in $\mathbb{R}^N$.} 
	J. Math. Anal. Appl. {276} (2002), 673-690. 
	
	\bibitem{Alves1} C. O. Alves and M. A S. Souto, {\it Existence of least energy nodal solution for a 
		Schr\"{o}dinger-Poisson system in bounded domains}, Z. Angew. Math. Phys. 65 (2014), 1153-1166.
		

		
		\bibitem{AR} A. Ambrosetti and P.H. Rabinowitz, {\it Dual variational methods in critical point theory and applications,} J. Funct. Anal. 14 (1973) 349-381
	
	\bibitem{AS} A. Ambrosetti and M. Struwe,  \textit{A note on the problem $\Delta u=\lambda u+|u|^{2^*-2}u,$} Manuscripta Math. {54} (1986),  373-379. 
	
	\bibitem{Weth} T. Bartsch, T. Weth and  M. Willem, {\it Partial symmetry of least energy
		nodal solution to some variational problems}, J. Anal. Math. {96}
	(2005)1-18.
	
	\bibitem{Bartsch} T. Bartsch and T. Weth, {\it Three nodal solutions of singularly perturbed
		elliptic equations on domains without topology}, Ann. Inst. H.
	Poincar\'e Anal. Non Lin\'eaire {22} (2005),  259-281.
	
	\bibitem{Bartsch1} T. Bartsch, Z. Liu and T. Weth, {\it Sign changing solutions of superlinear
		Schr\"{o}dinger equations}, Comm. Partial Differential Equations {29}
	(2004), 25-42.
	
	
	\bibitem{BG} E. Berchio and F. Gazzola, \textit{Positive solutions to a linearly perturbed critical growth biharmonic problem.} Discrete Contin. Dyn. Syst. Ser. S { 4} (2011),  809-823.
	
	\bibitem{BDF} D. Bucur and F. Gazzola, \textit{The first biharmonic Steklov eigenvalue: positivity preserving and shape optimization.} Milan J. Math. {79} (2011), 247-258.
	
	\bibitem{Castro} A. Castro, J. Cossio and J. Neuberger, {\it A sign-changing solution for a
		superlinear Dirichlet problem}, Rocky Mountain J. Math. {27} (1997), 1041-1053.
	
	\bibitem{FG} A. Ferrero and F. Gazzola, \textit{A partially hinged rectangular plate as a model for suspension bridges.} Discrete Contin. Dyn. Syst.  35 (2015), 5879-5908 .
	
	\bibitem{G}F. Gazzola, H. Grunau and G. Sweers, \textit{Polyharmonic boundary value problems.} Lectures notes in mathematics,1991. Springer-Verlag , Berlin, 2010.
	
	\bibitem{GR} H. Grunau and F. Robert,  \textit{Positivity and almost positivity of biharmonic Green's functions under Dirichlet boundary conditions.} Arch. Ration. Mech. Anal. 195 (2010), 865-898. 
	
	\bibitem{GK} C. P. Gupta and Y. C. Kwong, \textit{Biharmonic eigen-value problems and $L^ p$ estimates.} Int. J. Math. Sci. {13} (1990), 469-480.
	
	\bibitem{JQ} T. Jung and Q-Heung Choi, \textit{Nonlinear biharmonic boundary value problem.}  Bound. Value Probl. (2014), 2014:30.
	
	\bibitem{LM} A. C Lazer and P. J. McKenna, \textit{Large-amplitude periodic oscillations in suspension bridges: some new connections with nonlinear analysis.} Siam Rev. {32} (1990),  537-578.
	
	\bibitem{P3} M. T. O. Pimenta, \textit{Radial sign-changing solutions to biharmonic nonlinear Schr\"odinger equations.}   Bound. Value Probl. (2015), 2015:21.
	
	\bibitem{P1} M.T.O. Pimenta and S.H.M. Soares, \textit{Existence and concentration of solutions for a class of biharmonic
	equations.} J. Math. Anal. Appl. {390} (2012), 274-289. 
	
	\bibitem{P2} M.T.O. Pimenta and S.H.M. Soares, \textit{Singularly perturbed biharmonic problems with
	superlinear nonlinearities.} Adv. Differential Equations {19} (2014), 31-50. 
	
	\bibitem{YT} Y. Ye and Chun-Lei Tang, \textit{Existence and multiplicity of solutions for fourth-order elliptic equations in $\mathbb{R}^N$.}  J. Math. Anal. Appl. {406} (2013), 335-351. 
	
	\bibitem{ZWZ} W. Zhang, X. Tang and J. Zhang, \textit{Infinitely many solutions for fourth-order elliptic equations with sign-changing potential.}  Taiwanese J. Math. {18} (2014), 645-659. 
	
	\bibitem{Struwe} M. Struwe, \textit{Variational Methods: Applications to nonlinear partial differential equations and Hamiltonian systems, Springer, Berlin 1990. }
	
	\bibitem{Weth2}T. Weth, \textit{Nodal solutions to superlinear biharmonic equations via decomposition in
	dual cones.} Topol. Methods Nonlinear Anal. {28} (2006), 33-52. 
	
	\bibitem{W} M. Willem, \textit{Minimax Theorems}, Birkh\"auser Boston, MA, 1996.
	
	\bibitem{W1} M. Willem, \textit{Subharmonic oscillations for a semilinear wave equation}, Nonlinear Anal. 09 (1985), 503-514.
	
\end{thebibliography}
\end{document}